\newcommand{\COLORON}{0}
\newcommand{\NOTESON}{0}
\newcommand{\Debug}{0}
\newcommand{\comment}[1]{}
\newcommand{\COMMENT}[1]{}
\definecolor{darkgray}{rgb}{0.3,0.3,0.3}
\newcommand{\defi}[1]{{\color{darkgray}\emph{#1}}}
\newcommand{\acknowledgement}{\section*{Acknowledgement}}
\newtheorem{proposition}{Proposition}[section]
\newtheorem{theorem}[proposition]{Theorem}
\newtheorem{corollary}[proposition]{Corollary}
\newtheorem{lemma}[proposition]{Lemma}
\newtheorem{conjecture}{{Conjecture}}[section]
\newtheorem{problem}[conjecture]{{Problem}}
\newtheorem{examp}[proposition]{Example}
\newcommand{\FIG}{0}
\newcommand{\note}[1]{ 

\hspace*{-30pt}
	{\color{blue}  NOTE: \color{Turquoise}{\small  \tt \begin{minipage}[c]{1.1\textwidth}  #1 \end{minipage} \ignorespacesafterend }} 
	
	}
\else \newcommand{\note}[1]{} \fi
\newcommand{\afsubm}[1]{ \ifnum \Debug = 1 {\mymargin{#1}}
\fi} 
\newcommand{\fig}[1]{Figure ``{#1}''}
\else \newcommand{\fig}[1]{Figure~\ref{#1}} \fi
\renewcommand{\color}[1]{}
\newcommand{\N}{\ensuremath{\mathbb N}}
\newcommand{\R}{\ensuremath{\mathbb R}}
\newcommand{\Z}{\ensuremath{\mathbb Z}}
\newcommand{\BS}{\ensuremath{\mathbb S}}
\newcommand{\cc}{\ensuremath{\mathcal C}}
\newcommand{\ce}{\ensuremath{\mathcal E}}
\newcommand{\ct}{\ensuremath{\mathcal T}}
\newcommand{\cu}{\ensuremath{\mathcal U}}
\newcommand{\sm}{\backslash}
\DeclareRobustCommand{\cev}[1]{%
  \mathpalette\do@cev{#1}%
}
\newcommand{\do@cev}[2]{%
  \fix@cev{#1}{+}%
  \reflectbox{$\m@th#1\vec{\reflectbox{$\fix@cev{#1}{-}\m@th#1#2\fix@cev{#1}{+}$}}$}%
  \fix@cev{#1}{-}%
}
\newcommand{\fix@cev}[2]{%
  \ifx#1\displaystyle
    \mkern#23mu
  \else
    \ifx#1\textstyle
      \mkern#23mu
    \else
      \ifx#1\scriptstyle
        \mkern#22mu
      \else
        \mkern#22mu
      \fi
    \fi
  \fi
}
\newcommand{\pth}[2]{\ensuremath{#1}\text{--}\ensuremath{#2}~path}
\newcommand{\arc}[2]{\ensuremath{#1}\text{--}\ensuremath{#2}~arc}
\newcommand{\g}{\ensuremath{G\ }}
\newcommand{\G}{\ensuremath{G}}
\newcommand{\floor}[1]{\ensuremath{\left\lfloor #1 \right\rfloor}}
\newcommand{\Ex}{\mathbb E}
\newcommand{\Cg}{Cayley graph}
\newcommand{\Lr}[1]{Lemma~\ref{#1}}
\newcommand{\Tr}[1]{Theorem~\ref{#1}}
\newcommand{\Trs}[1]{Theorems~\ref{#1}}
\newcommand{\Sr}[1]{Section~\ref{#1}}
\newcommand{\Prb}[1]{Problem~\ref{#1}}
\newcommand{\Cr}[1]{Corollary~\ref{#1}}
\newcommand{\Dr}[1]{De\-fi\-nition~\ref{#1}}
\renewcommand{\iff}{if and only if}
\newcommand{\fe}{for every}
\newcommand{\st}{such that}
\newcommand{\ti}{there is}
\newcommand{\wrt}{with respect to}
\newcommand{\labtequ}[2]{
 \begin{equation} \label{#1} 	\begin{minipage}[c]{0.9\textwidth}  #2 \end{minipage} \ignorespacesafterend \end{equation} }
\newcommand{\mymargin}[1]{
 \ifnum \Debug = 1
  \marginpar{%
    \begin{minipage}{\marginparwidth}\small%
      \begin{flushleft}%
        {\color{blue}#1}%
      \end{flushleft}%
   \end{minipage}%
  }%
 \fi
}%
\newcommand{\mySection}[2]{}
\newtheorem{Def}{Definition}
\newcommand{\LSSC}{large-scale-simply-connected}
\newcommand{\asdim}{\mathrm{asdim}}
\newcommand{\ANdim}{\mathrm{ANdim}}
\newcommand{\diam}{\mathrm{diam}}
\begin{document}

\title{Triangulations of uniform subquadratic growth are quasi-trees}

\author[1]{Itai Benjamini}
\affil[1]{
{Weizmann Institute, Israel}
}
\author[2]{Agelos Georgakopoulos\thanks{Supported by the European Research Council (ERC) under the European Union's Horizon 2020 research and innovation programme (grant agreement No 639046), and by EPSRC grant EP/V048821/1.}}
\affil[2]{  {Mathematics Institute}\\
 {University of Warwick}\\
  {CV4 7AL, UK}}

\date{\today}
\maketitle

\begin{abstract}
It is known that for every $\alpha \geq 1$ there is a planar triangulation in which every ball of radius $r$ has size $\Theta(r^\alpha)$. We prove that for $\alpha <2$ every such triangulation is quasi-isometric to a tree. The result extends to Riemannian 2-manifolds of finite genus, and to large-scale-simply-connected graphs. We also prove that every planar triangulation of asymptotic dimension 1 is quasi-isometric to a tree.
\end{abstract}

{\bf{Keywords:} }planar triangulation, 2-manifold, uniform volume growth, quasi-tree, asymptotic dimension.\\

{\bf{MSC 2020 Classification:}} 05C10, 05C12, 57M15, 57M50, 51F30, 60G99.
\maketitle

\section{Introduction}
Motivated by an observation of physicists that in certain planar triangulations the size of the ball of radius $r$ is of order $r^4$ \cite{ADJ, AngGro}, it was proved in \cite{BeSchrRec} that for every $\alpha \geq 1$ there is a triangulation of the plane in which every metric ball $B_v(r)$ of radius $r$ has size $|B_v(r)|= \Theta(r^\alpha)$ independently of the choice of the  centre $v$ of the ball. The constructions of \cite{BeSchrRec} are quasi-isometric to trees. Our first result is that for $\alpha <2$, this is not a coincidence, i.e.\ every such triangulation must be quasi-isometric to a tree.
A \defi{planar triangulation} is a connected plane graph every edge of which is contained in two facial triangles ---see \Sr{def basic} for more detailed definitions.

\begin{theorem} \label{thm triang}
Let $G$ be a planar triangulation. Then either $G$ is quasi-isometric to a tree, or for every $r \in \N$ there is a vertex $v$ such that $|B_v(r)| > r^2$.
\end{theorem}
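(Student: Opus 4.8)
The plan is to reduce the theorem to \emph{Manning's bottleneck criterion}~\cite{Manning}, which says that a geodesic space is quasi-isometric to a tree if and only if there is a constant $\Delta$ such that for all $x,y$ and every midpoint $m$ of a geodesic from $x$ to $y$, every $x$--$y$ path meets $B_m(\Delta)$. So it suffices to prove the contrapositive: if $G$ is not quasi-isometric to a tree, then for every $r$ there is a vertex $v$ with $|B_v(r)|>r^2$. (A finite triangulation is quasi-isometric to a point, so we may assume $G$ is infinite.) Fix $r$. Since $G$ fails the bottleneck property at scale $\Delta=r$, there are vertices $x,y$, a geodesic $\gamma$ from $x$ to $y$ with midpoint $m$, and an $x$--$y$ path $P$ with $P\cap B_m(r)=\emptyset$. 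As $x\in P$ we have $d(x,m)>r$, so $\gamma$ has length $>2r$. I claim that $v:=m$ works.

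The heart of the matter is to show that $|S_m(j)|\ge 2j$ for each $1\le j\le r$, where $S_m(j)=\{u:d(m,u)=j\}$; summing over the spheres then gives $|B_m(r)|=\sum_{j=0}^{r}|S_m(j)|\ge 1+\sum_{j=1}^{r}2j=r^2+r+1>r^2$. To bound $|S_m(j)|$ I will exhibit a cycle $O_j$ with $V(O_j)\subseteq S_m(j)$ that encloses $m$ in a \emph{finite} triangulated disc. Fix $j\le r$. Since $\gamma$ is a geodesic through $m$ of length $>2r\ge 2j$, exactly the $2j+1$ vertices of $\gamma$ at distance $\le j$ from $m$ lie in $B_m(j)$, so the two vertices of $\gamma$ adjacent to these lie outside $B_m(j)$; running along $\gamma$ they reach $x$ and $y$ without re-entering $B_m(j)$, and $P$ joins $x$ to $y$ outside $B_m(j)$ (as $B_m(j)\subseteq B_m(r)$). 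Hence the component $H$ of $G\setminus B_m(j)$ containing $P$ also meets $\gamma$ on both sides of $m$. Because $G$ is a \emph{planar triangulation}, the frontier between $B_m(j)$ and $H$ is a cycle $O_j$: every cycle of a plane triangulation bounds a triangulated disc, and the relevant side of $O_j$ here --- the union $D_j$ of $B_m(j)$ with all components of $G\setminus B_m(j)$ other than $H$ --- is that bounded disc. Its frontier vertices have distance exactly $j$ from $m$, so $V(O_j)\subseteq S_m(j)$, and $m$ lies in the interior of $D_j$. The point is that $D_j$ is \emph{finite}: the path $P$ together with the two arcs of $\gamma$ emanating from $B_m(j)$ forces $H$ to be the component that surrounds $m$, so the $m$-side cannot run off to infinity.

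It then remains to invoke the elementary isoperimetric inequality for triangulated discs: in a finite triangulated disc with boundary cycle of length $L$, every vertex lies within distance $L/2$ of the boundary (in fact within $L/6+O(1)$, but $L/2$ suffices). Any path inside $D_j$ from $m$ to $O_j=\partial D_j$ is a path of $G$ from $m$ to $S_m(j)$, hence has length $\ge j$; so $j\le |O_j|/2$, i.e.\ $|O_j|\ge 2j$, and therefore $|S_m(j)|\ge |V(O_j)|=|O_j|\ge 2j$. This gives the claim, and hence the theorem.

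I expect the main difficulty to lie in the structural assertion of the second paragraph: that $D_j$ really is a finite, simply connected triangulated disc with $\partial D_j=O_j\subseteq S_m(j)$ and $m$ in its interior. One must handle the cases where $G\setminus B_m(j)$ has several components and where the frontier is a priori only a closed walk, and --- most importantly --- rule out the situation where $B_m(j)$ behaves like an \emph{annulus} rather than a disc (as in triangulated cylinders, whose large balls are annuli and which are indeed quasi-trees); it is precisely here that the failure of the bottleneck property at scale $r\ge j$, via the path $P$ and the long geodesic $\gamma$ through $m$, is used. This is a Jordan-curve-theorem argument, but a delicate one. A secondary point is to pin down the triangulated-disc isoperimetric constant precisely enough that the final count comes out as $>r^2$ rather than merely $\Omega(r^2)$.
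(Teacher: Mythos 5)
Your overall strategy is the paper's: invoke Manning's bottleneck criterion to obtain a geodesic $\Gamma$ through $m$ and a bypass $P$ avoiding $B_m(r)$, and then show that each sphere $S_m(j)$, $j\le r$, contains at least $2j$ vertices because a connected piece of the frontier of $B_m(j)$ must join the two points where $\Gamma$ leaves $B_m(j)$. But the step you use to do the counting is wrong. The ``elementary isoperimetric inequality'' asserting that every vertex of a finite triangulated disc with boundary length $L$ lies within distance $L/2$ (let alone $L/6+O(1)$) of the boundary is false: a fine triangulation of the $2$-sphere with one open face removed is a triangulated disc with boundary length $3$ and arbitrarily large inradius. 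So $d(m,O_j)=j$ does not force $|O_j|\ge 2j$ via any general lemma about discs. The correct source of the bound is already sitting in your set-up and is exactly what the paper uses: the two points $p_j,q_j$ where $\Gamma$ crosses $S_m(j)$ both lie on the frontier towards $H$ (the tails of $\Gamma$ beyond $B_m(j)$ lie in $H$ because they reach $P$), and $d(p_j,q_j)=2j$ since $\Gamma$ is a geodesic; hence any $p_j$--$q_j$ path inside that frontier has at least $2j$ edges and at least $2j$ distinct vertices, all in $S_m(j)$. No isoperimetry of discs is needed, only that the frontier connects $p_j$ to $q_j$.

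That connectivity statement is the second, and main, gap: you assert that the frontier between $B_m(j)$ and $H$ is a simple cycle $O_j$, flag it as ``delicate,'' and leave it unproved. As stated it can fail: the region occupied by $H$ may pinch at a vertex $v$ of $S_m(j)$ (the edges around $v$ alternating between $B_m(j)$ and $H$ in the cyclic order), in which case the frontier is only a closed walk with repeated vertices, and then $|V(O_j)|=|O_j|$ breaks down. What is actually needed is weaker and is the content of Lemma~\ref{lem triang}: if $x,y$ lie in a finite connected subgraph $H'$ of a planar triangulation and are joined by a path in $(G\setminus H')\cup\{x,y\}$, then they are joined by a path in $\partial H'$. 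The paper proves this by a short cycle-space computation rather than by chasing the Jordan curve theorem: form the cycle $C=P\cup Q$ with $Q$ an $x$--$y$ path inside $H'$, add to $E(C)$ the edge sets of all facial triangles on one side of $C$ having all three vertices in $H'$, and extract from the resulting element of $\mathcal{C}(G)$ a cycle through $P$; its other half is the desired path, and it lies in $\partial H'$ because each of its edges is in a facial triangle with a vertex outside $H'$. Your route can likely be repaired, but both the pinching issue and the false isoperimetric lemma must be replaced before the proof is complete.
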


For every $\alpha > 2$ on the other hand, a construction of Ebrahimnejad \& Lee \cite{EbrLeePla} yields ---after minor modifications--- planar  triangulations that are not quasi-isometric to a tree and satisfy $|B_v(r)|= \Theta(r^\alpha)$ uniformly in $v$. Combining these constructions with \Tr{thm triang}, and recalling that the triangular lattice has quadratic growth rate, we deduce that there is a planar triangulation with uniform volume growth $\Theta(r^\alpha)$ which is not quasi-isometric to a tree \iff\ $\alpha \geq 2$. (Such triangulations must still have relatively small cutsets at all scales \cite{BePaGro}.)


The trees constructed in \cite{BeSchrRec} can easily be modified into  2-manifolds with uniform volume growth $\Theta(r^\alpha)$ for any $\alpha>1$. We will also prove the following continuous analogue of \Tr{thm triang}:

\begin{corollary} \label{cor manifold}
Let $M$ be a  connected, complete, Riemannian 2-manifold of finite genus, with uniform subquadratic volume (i.e.\ area) growth. Then $M$ is quasi-isometric to a tree.
\end{corollary}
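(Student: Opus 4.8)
The plan is to deduce \Cr{cor manifold} from the extension of \Tr{thm triang} to large-scale-simply-connected graphs (stated in the abstract), via a discretisation of $M$. First I would replace $M$ by a graph. Since $M$ is complete, closed balls are compact by Hopf--Rinow, so every maximal $1$-separated set $V \subseteq M$ is locally finite; let $G$ be the graph on $V$ in which $u$ and $v$ are joined whenever $d_M(u,v) \le 3$. As $M$ is a geodesic space and $V$ a maximal $1$-net, the inclusion $V \hookrightarrow M$ is a quasi-isometry. The uniform upper bound $\mathrm{vol}\,B_v(r) = O(r^\alpha)$ together with a uniform positive lower bound on the volumes of unit balls (which is part of, or a routine consequence of, the uniform-volume-growth hypothesis) gives, by a packing argument, that $G$ has bounded degree and that $|B^G_v(r)|$ is comparable to $\mathrm{vol}\,B^M_v(r)$ up to fixed multiplicative and additive constants. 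Thus $G$ is a bounded-degree graph, quasi-isometric to $M$, with uniform subquadratic volume growth.

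The crux is to check that $G$ is large-scale simply connected, and this is where the finite-genus hypothesis and the fact that $M$ has \emph{no boundary} enter. Finite genus provides a compact $K \subseteq M$ outside which $M$ is planar, so the handles are confined to the bounded set $K$ and $\pi_1(M)$ is generated by loops meeting $K$ together with loops living in the planar part $M \setminus K$. Uniform subquadratic volume growth forces $M$ to be uniformly ``thin'': an end of $M$ along which the lengths of the shortest separating loops tend to infinity would make $\mathrm{vol}\,B_v(r) \gtrsim r^2$ for centres $v$ far out in that end, contrary to hypothesis. Since $M$ has no boundary, this thinness cannot be concealed behind a large hole --- a hole of large circumference would have to be capped off, which either reintroduces quadratic growth or destroys the uniformity --- so $\pi_1(M)$ is in fact generated by loops of bounded diameter. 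Hence for a suitable $k$ the Rips complex $R_k(G)$ is simply connected, i.e.\ $G$ is large-scale simply connected. I expect this to be the main obstacle: one has to make ``uniformly thin'' precise and, in particular, rule out surfaces that are thin but topologically complicated near infinity, such as a thickened chain of ever-larger cycles (which is thin, has uniform linear growth, and is not quasi-isometric to a tree) --- the point being that such a configuration cannot occur in a complete, boundaryless surface of uniform subquadratic growth.

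Granting this, $G$ is a bounded-degree, large-scale-simply-connected graph of uniform subquadratic growth, so by the above extension of \Tr{thm triang} it is quasi-isometric to a tree; composing with the quasi-isometry $M \to G$ from the first step completes the proof. When $M$ is compact the statement is trivial, $M$ being quasi-isometric to a point.
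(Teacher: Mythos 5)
Your reduction has a genuine gap at exactly the step you flag as ``the main obstacle'': the claim that uniform subquadratic area growth forces the discretisation $G$ to be LSSC. Nothing in the proposal proves this, and it is precisely where all the difficulty of the corollary sits. Note that the paper explicitly warns against this reduction: \Tr{thm triang} follows from \Tr{thm lssc} only when $G$ is finite or 1-ended, because a multi-ended planar triangulation need not have its cycle space generated by short cycles, and the surfaces allowed in \Cr{cor manifold} may have infinitely many ends (e.g.\ the Cantor sphere), so their nets are exactly the multi-ended planar triangulations for which no LSSC property is available a priori. Your heuristic --- an end with only long separating loops forces quadratic balls, and ``a hole of large circumference would have to be capped off, which either reintroduces quadratic growth or destroys the uniformity'' --- is not a proof and is not even obviously true as stated: a boundary circle of length $L$ can be capped by a disc of arbitrarily small area (a collapsed ``tongue''), so capping does not by itself cost area; what it does is alter distances, and quantifying the resulting trichotomy (short cut across the loop, or a wide region with a near-Euclidean ball, or large area) is essentially the content of the paper's \Lr{lem con surf} and \Lr{lem arcs}. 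So the proposal defers the entire proof to an unproved lemma. A secondary gap: the comparison $|B^G_v(r)|\asymp \mathrm{Area}(B^M_v(r))$ and the bounded-degree claim both need a uniform positive lower bound on the areas of unit balls, which is \emph{not} a consequence of uniform subquadratic growth (a complete surface may contain tubes or fingers of arbitrarily small area, and a bounded-diameter, bounded-area region can contain arbitrarily many $1$-separated points), so ``area subquadratic'' does not transfer to ``vertex count subquadratic''.

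For comparison, the paper takes a different and more robust route: it first performs finitely many surgeries along non-contractible rectifiable loops to kill the genus, glues in finite-area discs to restore completeness (affecting growth only at small radii), and then applies a genuinely continuous analogue of \Tr{thm triang}, namely \Tr{thm surfaces}, to the resulting complete planar surface. That theorem is proved by combining Manning's criterion for length spaces with a planarity-specific separation lemma (\Lr{lem con surf}, itself deduced from \Lr{lem triang} via a fine triangulation) and a direct Hausdorff-measure lower bound (\Lr{lem arcs}); no LSSC hypothesis and no volume-to-vertex-count comparison are needed. If you want to salvage your approach, you would have to prove, with quantitative constants, that every end of a complete genus-zero surface of uniform subquadratic growth is separated by uniformly short loops arbitrarily far out --- at which point you will have reproved the core of \Tr{thm surfaces}.
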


\comment{
	Combining \Tr{thm triang} with results in percolation theory we deduce that every planar triangulation \g of uniform subquadratic volume growth has very poor isoperimetric properties: \g must have  arbitrarily large subgraphs $H$ with boundary $|\partial H|$ of size $o(log |H|)$ (\Cr{cor cutsets}). This is in stark contrast with the result \cite{BeSchrPin} saying that graphs of pinched exponential volume growth satisfy an infinite-dimensional isoperimetric inequality.
}

\medskip
Our next result is a variant of \Tr{thm triang} in which planarity is replaced by the requirement that \g be \defi{large-scale-simply-connected (LSSC)}.
The \defi{cycle space} $\cc(G)$ of a graph \g is its first simplicial homology group over the 2-element field $\Z_2$.  

\begin{theorem} \label{thm lssc}
Let $G$ be a graph, and suppose $\cc(G)$ is generated by cycles of length at most $k$. Then either $G$ is quasi-isometric to a tree, or for every $r \in \N$ there is a vertex $v$ such that $|B_v(r)| \geq c(k) r^2$.
\end{theorem}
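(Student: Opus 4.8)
The plan is to establish a dichotomy: either the graph has a quasi-isometric "tree-like" structure (bounded-width tree decomposition at all scales, in the coarse sense), or balls must grow quadratically with the stated constant depending on $k$. The natural framework is to work at scale $K$ for $K$ a large multiple of $k$: contract $K$-balls to get a "coarsening" of $G$, and use the LSSC hypothesis to transfer cycle-space information between scales. Since $\cc(G)$ is generated by cycles of length $\le k$, any cycle in $G$ can be written as a $\Z_2$-sum of short cycles, which means that at scales much larger than $k$ the graph behaves, homologically, like a simply connected space — this is the engine that should force tree-like behaviour or growth.

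First I would set up the coarse machinery: fix $K \gg k$, choose a maximal $K$-separated set $S$ of vertices, and form an auxiliary graph $G_K$ on $S$ where two points are adjacent if they are within $3K$ (say) in $G$; this $G_K$ is quasi-isometric to $G$ with constants depending only on $K$. The LSSC hypothesis should imply that $G_K$ is itself LSSC with a constant depending on $k/K$ — in fact, for $K$ large enough relative to $k$, any cycle in $G_K$ lifts to a cycle in $G$, decomposes into $\le k$-cycles, each of which projects to a path of length $O(k/K)$ in $G_K$, i.e.\ becomes trivial; so $\cc(G_K)$ has bounded-length generators too, uniformly. Next, the key structural step: a graph which is LSSC and in which every ball of radius $r$ has size $\le c r^2$ for a suitable small $c=c(k)$, with "small" $c$ because we can rescale $K$, should be shown to have a tree decomposition into parts of bounded diameter — the subquadratic (here, strictly-less-than-$c r^2$) growth bound combined with near-planarity of the local structure (LSSC gives a "filling" of cycles by discs of bounded-size) prevents the expansion needed to support a non-tree-like coarse geometry. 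Concretely, I would argue that if $G$ is not quasi-isometric to a tree then (by a Manning-type bottleneck criterion, or by Diestel–Kühn-style end/separator analysis) there exist arbitrarily large "fat" bigons — two long geodesics between a pair of points that stay far apart — and the LSSC filling of the cycle they bound must contain a grid-like piece forcing $\ge c(k) r^2$ vertices in a ball.

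The main obstacle, I expect, is the last step: extracting quadratic growth from the combination of "fat bigon" plus "bounded-length cycle-space generators." The heuristic is that filling a fat bigon of width $w$ and length $L$ by cells of size $\le k$ requires on the order of $wL/k^2$ cells, hence that many vertices, giving area growth; making this rigorous means producing an honest isoperimetric/filling inequality from the algebraic LSSC hypothesis and then converting an area lower bound into a ball-size lower bound of the form $|B_v(r)| \ge c(k) r^2$ with the right dependence on $k$. I would handle this by a discrete analogue of the coarea/filling argument: choose $r$, find a fat bigon of dimensions $\gtrsim r \times r$ inside a ball of radius $O(r)$, tile its LSSC-filling disc greedily by $k$-cells, and count — the count is $\Omega(r^2/k^{2})$, and absorbing constants into $c(k)$ finishes it. The planarity-free input that makes the bigon's filling behave like a disc (rather than something wildly branched) is exactly where the LSSC hypothesis, as opposed to mere planarity, is used, and care is needed to make the tiling argument not overcount or rely on embeddedness.
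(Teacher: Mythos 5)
Your starting point is right: Manning's bottleneck criterion is exactly what the paper uses to produce, for each $r$, a geodesic $\Gamma$ through a midpoint $m$ together with a second $p$--$q$ path $P$ avoiding $B_m(r)$ --- your ``fat bigon''. But the step you yourself flag as the main obstacle is a genuine gap, and the way you propose to close it does not work. Counting the $\Omega(r^2/k^2)$ cells in a filling of the bigon does not lower-bound the number of \emph{vertices}: cycles of length $\le k$ can share vertices arbitrarily (a small vertex set can support a huge number of short cycles), so ``that many cells, hence that many vertices'' fails. Moreover, a $\Z_2$-decomposition of the bigon into generators of length $\le k$ need not a priori be supported anywhere near the bigon, so even the claim that the filling lives inside $B_m(O(r))$ would need proof. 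The first half of your proposal (coarsening at scale $K$, bounded-width tree decompositions) is neither carried out nor needed.

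The paper closes exactly this gap with a lemma of Timar (\cite[Theorem 5.1.]{TimCut}, quoted as \Lr{lem Timar}): in a $k$-SC graph, if $x,y$ lie in a finite connected subgraph $H$ and are joined by a path in $(G\sm H)\cup\{x,y\}$, then they are joined by a path every vertex of which is within distance $k/2$ of $\partial H$. Applying this with $H=B_m(i)$ for each $i\in[1,r]$ yields a path $P_i$ near $\partial B_m(i)$ joining the two points $p_i,q_i$ of $\Gamma\cap\partial B_m(i)$, which are at distance $2i$ apart, so $|P_i|\ge 2i$. Keeping only every $(k+1)$st index makes these paths pairwise disjoint and contained in $B_m(r)$, and summing their lengths gives $|B_m(r)|\ge c(k)r^2$ directly --- no filling area, no cell-to-vertex conversion. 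Timar's lemma is, in effect, the rigorous form of the coarea step your heuristic needs: it converts ``the filling must cross the sphere $\partial B_m(i)$ all along its width'' into an honest set of at least $2i$ distinct vertices near that sphere. To salvage your route you would have to prove a statement of this kind anyway, at which point the filling machinery becomes superfluous.
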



Here the $c(k)$ are universal constants. We remark that \Tr{thm triang} follows from \Tr{thm lssc} when $G$ is finite or 1-ended, because the cycle space is generated by the facial triangles in this case, but requires arguments specific to the planar case when \g has more ends. (A planar triangulation can have any number of ends up to the cardinality of the continuum; the duals of the graphs in \cite{cayley3} provide some examples. Likewise, $M$ need not be of finite type in \Cr{cor manifold}, since we are not imposing a restriction on the number of punctures; for example $M$ could be homeomorphic to the Cantor sphere.)
The LSSC condition cannot be relaxed in \Tr{thm lssc}, even if we impose a strong additional condition like planarity, as we show with an example in \Sr{sec graphs}.

\medskip
Fujiwara \& Whyte proved that every LSSC graph \g of asymptotic dimension 1 is a quasi-tree \cite{FujWhyNot}\footnote{For \Cg s this was also proved in \cite{GenAsy}.}. (The converse is well-known: every unbounded quasi-tree has asymptotic dimension 1.) We prove that if \g is a planar triangulation, then the LSSC condition can be dropped:

\begin{theorem} \label{asdim triang}
Let $G$ be an infinite planar triangulation. Then either\\ $\asdim(G)=2$, or $G$ is quasi-isometric to a tree (in which case $\asdim(G)=1$).
\end{theorem}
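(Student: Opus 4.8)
The plan is to compute $\asdim(G)$ on each side of the dichotomy. The quasi-tree side needs nothing new: $G$ is infinite and locally finite, hence unbounded, so if $G$ is quasi-isometric to a tree then $\asdim(G)=1$ by the fact recalled in the introduction that every unbounded quasi-tree has asymptotic dimension $1$. For the other side the upper bound $\asdim(G)\le 2$ comes for free from the known fact that every planar graph has asymptotic dimension at most $2$. So the whole statement reduces to a single implication: if the planar triangulation $G$ satisfies $\asdim(G)\le 1$, then $G$ is a quasi-tree.

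Assume $\asdim(G)\le 1$. If $G$ is $1$-ended then it is large-scale-simply-connected: as observed in the discussion after \Tr{thm lssc}, each cycle of a $1$-ended plane graph bounds a finite union of faces, so $\cc(G)$ is generated by the (length-$3$) facial triangles. The theorem of Fujiwara and Whyte \cite{FujWhyNot} --- an LSSC graph of asymptotic dimension $1$ is a quasi-tree --- then gives the conclusion immediately. The same argument applies whenever $G$ is LSSC, and an infinite planar triangulation that is not LSSC must have at least two ends. So the one remaining case, and the main obstacle, is that $G$ is multi-ended.

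For the multi-ended case I would reduce to the case already settled by localising the obstruction to being tree-like. The aim is to extract from $G$ a $1$-ended planar triangulation $H$ that is coarsely embedded in $G$ with uniform constants and is itself not a quasi-tree: intuitively one selects an end of $G$ near which $G$ is ``two-dimensional'', caps off all other ends while keeping the complex a triangulation, and checks that a certificate of non-tree-likeness survives. Granting such an $H$, the $1$-ended case gives $\asdim(H)=2$, and monotonicity of asymptotic dimension under coarse embeddings yields $\asdim(G)\ge\asdim(H)=2$, which together with the upper bound completes the proof. Carrying out this extraction is exactly where the planarity-specific machinery behind \Tr{thm triang} in the multi-ended regime is needed, and it is the step I expect to be hardest, because one must simultaneously arrange that $H$ is again a planar triangulation, that $H$ still witnesses being far from a tree, and that the inclusion $H\hookrightarrow G$ distorts distances only boundedly; if building a literal coarsely embedded sub-triangulation turns out to be too rigid, a fallback is to read off from the proof of \Tr{thm triang} a coherently positioned family of ``fat'' grid-like minors in $G$ and argue directly that their presence forces $\asdim(G)\ge 2$.
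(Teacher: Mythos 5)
Your reduction is fine up to and including the $1$-ended case: the upper bound $\asdim(G)\le 2$ from planarity, the fact that an unbounded quasi-tree has $\asdim=1$, and the observation that a $1$-ended (or, more generally, LSSC) planar triangulation of asymptotic dimension $1$ is a quasi-tree by Fujiwara--Whyte are all correct. But the multi-ended case is exactly the content of the theorem --- the paper states explicitly that the LSSC case is already covered by \cite{FujWhyNot} --- and your treatment of it is a plan, not a proof. The key step, extracting a coarsely embedded $1$-ended planar triangulation $H$ that is still not a quasi-tree, is neither constructed nor shown to exist, and in the form you state it (``select an end near which $G$ is two-dimensional'') it can fail: the failure of the bottleneck property can be witnessed by arbitrarily large but bounded ``wide'' regions scattered towards infinitely many different ends, so that no single end of $G$ is two-dimensional and every one-ended sub-triangulation you cap off could be a quasi-tree. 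Your fallback (fat grid-like minors forcing $\asdim\ge 2$) is the more promising direction but is also only gestured at; coherently positioned large grids would indeed give $\asdim\ge 2$, but producing them from the failure of the bottleneck property is nontrivial and is not done.

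The paper avoids the case distinction on ends entirely. It reformulates $\asdim(G)\le 1$ as the existence of a $2$-colouring of $V(G)$ in which every monochromatic connected subgraph has diameter less than some $r$, assumes $G$ fails the $10r$-bottleneck property at some midpoint $m$, and then iterates \Lr{lem triang} (which holds for every planar triangulation, with no hypothesis on ends) to produce a nested sequence of monochromatic components in alternating colours separating $m$ from the outside path $P$; properties of the geodesic through $m$ force one of these components to have diameter at least $r$, a contradiction. If you want to salvage your approach you would have to either carry out the grid-minor extraction in full, or switch to this direct colouring argument; as written, the multi-ended case is a genuine gap.
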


This uses a recent result that every planar graph has asymptotic dimension at most 2 \cite{BBEGLPS,JorLanGeo}.  \Tr{asdim triang} cannot be extended to planar graphs with arbitrarily long facial cycles as shown by \cite[Example 2.4.]{FujWhyNot}. We can replace $\asdim$ by the Assouad-Nagata dimension $\ANdim$ throughout the above discussion. In particular, we deduce that $\asdim(G)=\ANdim(G)$ when \g is a planar triangulation. We present an example in \Sr{sec asdim} showing that this equality does not hold for all planar graphs.

\medskip
It is a consequence of Gromov's theorem \cite{GroGro} that there is no group of volume growth of order $\Theta(r^\alpha)$ for non-integer $\alpha$. De la Harpe \cite{dlHarpe} asks for an elementary proof of this fact. \Tr{thm lssc} easily implies that there is no finitely presented group of superlinear but subquadratic growth, see \Cr{cor VT}. Alternative elementary proofs of this fact, covering the infinitely presented case as well, are provided in \cite{ImrSeiBou,JusGro,WilDriEff}. Some elementary proofs of the fact that every group of linear growth is quasi-isometric to $\Z$ have been provided by Bill Thurston\footnote{https://mathoverflow.net/questions/21578/is-there-a-simple-proof-that-a-group-of-linear-growth-is-quasi-isometric-to-z.}.

\section{Preliminaries}

\subsection{Basic definitions} \label{def basic}

We use standard graph-theoretic terminology following e.g.\ \cite{DiestelBook05}.
A \defi{plane graph} is a subset of $\R^2$ that is homeomorphic to a graph when the latter is viewed as an 1-complex. In other words, a plane graph is a planar graph endowed with a fixed embedding in $\R^2$. A \defi{face} of a plane graph \g is a component of $\R^2 \sm G$. A \defi{facial triangle} of \g is a cycle consisting of three edges that bounds a face.
A \defi{planar triangulation} is a connected plane graph every edge of which is contained in two facial triangles. For example, the 1-skeleton of every triangulation of $\R^2$ in the topological sense is a planar triangulation.

The \defi{edge space} $\ce(G)$ of a graph $G=(V,E)$ is the vector space $\mathbb{F}_2^E$ over the 2-element field $\mathbb{F}_2$, where vector addition amounts to symmetric difference. The \defi{cycle space} $\cc(G)$ is the subspace of $\ce(G)$ generated by the edge-sets of cycles.

The \defi{graph distance} $d(x,y)$ between two vertices $x,y\in V$ is the minimum number of edges in an \pth{x}{y}\ in \G. 

\subsection{Manning's theorem} \label{Manning}

A \defi{quasi-isometry} between graphs $G=(V,E)$ and $H=(V',E')$ is a map\\ $f: V \to V'$ \st\ the following hold for fixed constants $M\geq 1, A\geq 0$:
\begin{enumerate}
\item $M^{-1} d(x,y) -A \leq d(f(x),f(y))\leq M d(x,y) +A $ \fe\ $x,y \in V$, and
\item \fe\ $z\in V'$ \ti\ $x\in V$ \st\ $d(z,f(x))\leq A$.
\end{enumerate}
Here $d(\cdot,\cdot)$ stands for the graph distance in the corresponding graph $G$ or $H$. We say that $G$ and $H$ are \defi{quasi-isometric}, if such a map $f$ exists. Quasi-isometries between arbitrary metric spaces are defined analogously.

\medskip
We now recall Manning's \cite{Manning} characterization of the graphs allowing a quasi-isometry to a tree, which graphs we will call \defi{quasi-trees}:

\begin{Def} \label{def BP}
A graph  $G=(V,E)$ has the $\delta$-\defi{Bottleneck Property (BP)},  if for all $x, y \in V$  there is a `midpoint' $m = m(x,                                                                                                                                                                                                                                                                           y)$ with $d(x,m) = d(y,m) = d(x,y)/2$, such that any path from $x$ to $y$ meets the ball  $B_m(\delta)$. (We allow $m$ to lie in the middle of an edge.)
\end{Def}

\begin{theorem}[\cite{Manning}] \label{Manning thm}
A graph $G$ is quasi-isometric to a tree \iff\ it satisfies the $\delta$-(BP) for some $\delta>0$.
\end{theorem}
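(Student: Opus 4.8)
The plan is to prove the two implications of \Tr{Manning thm} separately, with the direction ``$\delta$-(BP) $\Rightarrow$ quasi-tree'' carrying all the weight.

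\textbf{Easy direction.} Suppose $G$ is quasi-isometric to a tree $T$ via $f\colon V(G)\to V(T)$ with constants $M,A$. Trees are $0$-hyperbolic, so the image under $f$ of a $G$-geodesic is a quasigeodesic staying within a constant $D=D(M,A)$ of the corresponding geodesic arc of $T$; hence, for $x,y\in V(G)$, if $m$ is the midpoint of a $G$-geodesic $[x,y]$ and $m'$ the midpoint of the arc $[f(x),f(y)]$ in $T$, then $d_T(f(m),m')\le D'(M,A)$. Given any $x$--$y$ path $P$ in $G$, its vertex sequence maps to points of $T$ with consecutive terms at distance $\le M+A$, and joining these by arcs of $T$ yields a connected subset of $T$ from $f(x)$ to $f(y)$; since $m'$ separates $f(x)$ from $f(y)$ in $T$, this set meets $B_{m'}(M+A)$. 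So some vertex $v$ of $P$ has $d_T(f(v),m')\le M+A$, whence $d_T(f(v),f(m))\le M+A+D'$ and $d_G(v,m)\le M(M+A+D')+MA=:\delta$. Thus $G$ has the $\delta$-(BP).

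\textbf{Hard direction: building the tree.} Suppose $G$ has the $\delta$-(BP). Fix a basepoint $o\in V(G)$ and a well-ordering of $V(G)$, and construct a nested transfinite family of subtrees: $T_0=\{o\}$; given $T_\lambda$, stop if every vertex of $G$ lies within distance $10\delta$ of $T_\lambda$, and otherwise take the least vertex $x$ at distance $>10\delta$ from $T_\lambda$, fix a $G$-geodesic from $x$ to a nearest vertex of $T_\lambda$, and set $T_{\lambda+1}:=T_\lambda\cup(\text{this geodesic})$, taking unions at limits. Each new geodesic meets the current subtree in exactly one vertex, so $T:=\bigcup_\lambda T_\lambda$ is a subtree of $G$, and it is $10\delta$-coarsely dense by construction. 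Being a subgraph, $T$ gives a $1$-Lipschitz coarsely surjective inclusion $\iota\colon V(T)\to V(G)$, so it suffices to prove a reverse bound $d_G(\iota u,\iota w)\ge c\,d_T(u,w)-c$ for $u,w\in V(T)$: this makes $\iota$ a quasi-isometry onto a coarsely dense set, and $T$ is already a tree.

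\textbf{Hard direction: the crux.} A $T$-geodesic $\sigma$ from $u$ to $w$ is a concatenation of sub-arcs of the construction geodesics, each of which is a $G$-geodesic; the greedy ``nearest vertex'' choice at each junction, together with the bottleneck hypothesis, should keep $\sigma$ from folding back on itself. I would route the reverse bound through two steps. First, derive from the $\delta$-(BP) that $G$ is Gromov hyperbolic with constant depending only on $\delta$: applying the bottleneck to sub-geodesics $[x,y']$ of a side $[x,y]$ of a geodesic triangle $xyz$ (against the path along the other two sides and backwards along $[x,y]$) forces each point of $[x,y]$ to lie within $O(\delta)$ of $[x,z]\cup[z,y]$, so triangles are uniformly thin and $G$-geodesics are uniformly stable. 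Second, use stability at the branch vertices of $\sigma$ to show $\sigma$ is a uniform $(\lambda,\epsilon)$-quasigeodesic in $G$ with $\lambda,\epsilon$ depending only on $\delta$; putting the endpoints into the quasigeodesic inequality gives $d_G(\iota u,\iota w)\ge d_T(u,w)/\lambda-\epsilon$. Equivalently one can subdivide $\sigma$ into sub-arcs of length $\asymp\delta$ and argue that every $x$--$y$ path in $G$ must pass near each division point. The main obstacle, in either formulation, is keeping the constants uniform in $u,w$ --- in particular preventing additive errors from accumulating over the $\sim\log d_T(u,w)$ scales that a naive bisection argument would involve; this is precisely why hyperbolicity and geodesic stability should be extracted from the bottleneck property first and then used quantitatively, rather than invoking the midpoint bottleneck recursively.
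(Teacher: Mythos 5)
The paper does not prove this statement: it is quoted verbatim from Manning's paper (the citation \cite{Manning} in the theorem header is the proof), so there is no internal argument to compare yours against. Judged on its own, your write-up is a proof only of the easy direction, and even there one intermediate claim is wrong as stated: for a $(M,A)$-quasi-isometry $f$ with $M>1$, the image $f(m)$ of the midpoint of $[x,y]$ need \emph{not} lie within a bounded distance of the midpoint $m'$ of $[f(x),f(y)]$ (reparametrisation can shift it by $\Theta\bigl((M-M^{-1})\,d(x,y)\bigr)$). The fix is harmless: replace $m'$ by the nearest-point projection $p$ of $f(m)$ onto the arc $[f(x),f(y)]$; $p$ still separates $f(x)$ from $f(y)$ in $T$ and is within the Morse constant of $f(m)$, and the rest of your argument goes through.

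The hard direction, however, has a genuine gap exactly where you flag it. The greedy coarsely dense subtree exists and is indeed a tree (the nearest-point geodesic meets the current tree only at its endpoint), but the assertion that its geodesics are uniform quasi-geodesics of $G$ \emph{is} the theorem, and neither of your two proposed routes closes it. Deriving hyperbolicity from the $\delta$-(BP) is fine but cannot suffice: $\mathbb{H}^2$ (or any hyperbolic group that is not virtually free) is Gromov hyperbolic, admits exactly such a greedy coarsely dense subtree, and is not a quasi-tree --- so that subtree is \emph{not} quasi-isometrically embedded. Hence no amount of geodesic stability applied to the branches of $\sigma$ can work unless the bottleneck property re-enters quantitatively at every junction, and $\sigma$ is a concatenation of unboundedly many geodesic segments, so one cannot invoke ``local quasi-geodesics are global'' without first bounding the backtracking at each junction --- which is again the missing content. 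Manning's actual argument avoids this by building the tree out of the bottleneck structure itself (roughly, from the components of complements of balls around a basepoint, using the BP to show that two points on a sphere $\partial B_{o}(n)$ joined by a path outside $B_o(n-k)$ are uniformly close), rather than by certifying an a priori chosen subtree. As it stands, your hard direction is a plan with the decisive step unproved.
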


This theorem generalises to arbitrary \defi{length spaces}, which will allow us to apply it to manifolds. A metric space $(X,d)$ is a \defi{length space}, if for every $x,y\in X$ and $\epsilon>0$ there is an \arc{x}{y}\ in $X$ of length less that $d(x,y)+ \epsilon$. The definition of the $\delta$-(BP) for a length space is similar, we just let $m$ be an approximate midpoint.

\begin{theorem}
\label{Manning LS}
A length space $(X,d)$ is quasi-isometric to a tree \iff\ it satisfies the $\delta$-(BP) for some $\delta>0$.
\end{theorem}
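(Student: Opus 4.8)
The plan is to deduce \Tr{Manning LS} from the graph version \Tr{Manning thm} by discretisation, the point being that every length space is quasi-isometric to a graph and that the $\delta$-(BP), suitably interpreted, is a quasi-isometry invariant. Fix a maximal $1$-separated subset $N$ of $X$ (one exists by Zorn's lemma), and let $\Gamma$ be the graph with vertex set $N$ in which $x,y\in N$ are adjacent whenever $d(x,y)\leq 3$. Using that $X$ is a length space, one checks in the standard way that $\Gamma$ is connected and that the inclusion $N\hookrightarrow X$ is a quasi-isometry from $\Gamma$ (with its graph metric) onto $(X,d)$: maximality of $N$ gives coarse surjectivity with constant $1$; sampling at arclength spacing $1$ along a near-geodesic gives $d_\Gamma\leq d+1$ on $N$; and concatenating length-$\leq 3$ arcs along a $\Gamma$-path gives $d\leq 3\,d_\Gamma$ on $N$. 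Since being quasi-isometric to a tree is a quasi-isometry invariant, it suffices to match up the $\delta$-(BP) of $X$ with that of $\Gamma$ and invoke \Tr{Manning thm}.

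The technical heart is a two-part claim about a coarsened bottleneck property. Call $m$ a \emph{$C$-coarse midpoint} of $x,y$ if $|d(x,m)-d(y,m)|\leq C$ and $d(x,m)+d(y,m)\leq d(x,y)+C$, and say a metric space $Z$ has the \emph{coarse bottleneck property} if there is $C$ such that for all $x,y\in Z$ some $C$-coarse midpoint $m$ of $x,y$ meets every arc from $x$ to $y$ in $B_m(C)$. First, this property is a quasi-isometry invariant when $Z$ is a length space or a graph: given a quasi-isometry $f\colon Z\to Z'$ (constants $M,A$) with $Z$ having the property and $x,y\in Z'$, pick $x_0,y_0\in Z$ mapping $A$-close to $x,y$, and set $m:=f(m_0)$ for a $C$-coarse midpoint $m_0$ of $x_0,y_0$ witnessing the property in $Z$; the quasi-isometry inequalities make $m$ a $C'$-coarse midpoint of $x,y$, and given an arc $\gamma$ from $x$ to $y$ in $Z'$ one chops it (by uniform continuity, using compactness of its image) into hops of length $\leq 1$, lifts the division points to $Z$ so that they are $A$-close to them, and joins consecutive lifts by arcs of length $\leq D(M,A)$ in the length space $Z$, obtaining a path from $x_0$ to $y_0$; its image contains an arc from $x_0$ to $y_0$ (the Hausdorff-space fact that an arc lives inside any path between the same endpoints), which meets $B_{m_0}(C)$, so $\gamma$ meets $B_m(C')$ with $C'$ explicit in $M,A,C$. (When $Z'$ is a graph and $\gamma$ a walk the chopping is unnecessary; when $Z$ is a graph, a path in its realisation corresponds to a walk.) Secondly, for a graph (resp.\ length space) the coarse bottleneck property implies the ordinary $\delta$-(BP) with exact (resp.\ approximate) midpoints: given $x,y$ and a witnessing $C$-coarse midpoint $m$, let $\mu$ be the midpoint of a shortest $x$--$y$ path (resp.\ a point at arclength $\tfrac12 d(x,y)$ along a near-geodesic); applying the property to that geodesic itself shows the bottleneck point it must contain lies within $O(C)$ of $\mu$, whence $B_m(C)\subseteq B_\mu(O(C))$ and every arc from $x$ to $y$ meets $B_\mu(O(C))$.

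Granting this, the theorem follows quickly. If $X$ is quasi-isometric to a tree $T$, then $T$ ---where every walk between two vertices traverses the unique path between them--- has the $\tfrac12$-(BP), hence the coarse bottleneck property; by invariance so does $X$, so $X$ has the $\delta$-(BP) for some $\delta>0$. Conversely, if $X$ has the $\delta$-(BP) then it has the coarse bottleneck property, hence so does $\Gamma$ by invariance, hence $\Gamma$ has the ordinary $\delta'$-(BP); by \Tr{Manning thm} $\Gamma$ is a quasi-tree, and since $X$ is quasi-isometric to $\Gamma$, so is $X$.

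The step I expect to be the main obstacle is getting the midpoints right. Pushing a midpoint forward along a quasi-isometry only controls its distances to the endpoints up to a \emph{multiplicative} factor (roughly $M^2$), so the naive argument does not preserve the literal ``$\tfrac12 d(x,y)$'' --- or even ``$\tfrac12 d(x,y)+$const'' --- requirement; this is exactly why the proof detours through the coarse bottleneck property and then regularises by sliding the bottleneck ball onto a genuine (approximate) midpoint via intersection with a geodesic. The rest is bookkeeping: keeping the arc/walk translations honest through the Hausdorff ``arc inside a path'' fact, and verifying connectedness of $\Gamma$ together with the two quasi-isometry inequalities for $N\hookrightarrow X$ ---all routine for length spaces, with the genuine content residing entirely in \Tr{Manning thm}.
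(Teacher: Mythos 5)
The paper does not actually prove this statement: Theorem~\ref{Manning LS} is asserted as a routine generalisation of Manning's theorem (which Manning proves for geodesic spaces), with no argument given. Your proposal therefore cannot match ``the paper's proof''; instead it supplies a genuine, self-contained derivation of the length-space version from the graph version \Tr{Manning thm}, and as far as I can check it is correct. The discretisation via a maximal $1$-separated net is standard and sound, and you correctly identify and resolve the one real difficulty, namely that exact midpoints are not preserved by quasi-isometries: your detour through a ``$C$-coarse midpoint'' version of the bottleneck property, followed by sliding the bottleneck ball onto a true (approximate) midpoint by intersecting with a near-geodesic (using that on a geodesic $d(z,\mu)=|d(x,z)-d(x,\mu)|$), is exactly the regularisation needed, and the constants work out to $O(M^2(C+A))$ as you indicate. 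The lifting of an arc in $Z'$ to a path in $Z$ (chop by uniform continuity, lift division points, bridge by short arcs using the length-space property, extract an arc inside the resulting path) is also fine, and is where the length-space hypothesis is genuinely used. The only remarks worth making are cosmetic: your phrase ``some $C$-coarse midpoint $m$ of $x,y$ meets every arc\ldots in $B_m(C)$'' should read ``every arc from $x$ to $y$ meets $B_m(C)$''; and an alternative, shorter route is simply to observe that Manning's original proof for geodesic spaces goes through verbatim with approximate midpoints and approximate geodesics, which is presumably what the paper has in mind. Your route buys a black-box reduction to the graph case at the cost of the coarse-midpoint bookkeeping; either is acceptable.
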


\section{Results for Triangulations and Graphs} \label{sec graphs}

For a graph $G$ and a subgraph $H\subseteq G$, we define the boundary\\ $\partial H:= \{v\in V(H) \mid \text{ there is } vw\in E(G) \text{ with } w\not\in V(H) \}$. The following lemma will be used in the proof of \Tr{thm triang}. 

\begin{lemma} \label{lem triang}
Let $G$ be a planar triangulation, and let $H$ be a finite connected subgraph of $G$. Suppose two vertices $x,y \in V(H)$ are connected by a path $P$ in $(G \sm H) \cup \{x,y\}$. Then $x,y$ are connected by a path in $\partial H$. 
\end{lemma}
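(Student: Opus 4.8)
The plan is to work in the plane, using the fact that every edge of $G$ lies on two facial triangles. Consider the path $P$ in $(G \sm H) \cup \{x,y\}$ joining $x$ to $y$; together with a suitable arc inside $H$ it will bound a disc-like region, and I want to extract from $\partial H$ a path between $x$ and $y$ that lies ``just inside'' $P$. More concretely, I would look at the union of all faces of $G$ met by $P$ together with the faces incident to edges of $P$, and track how the triangulation near $P$ forces vertices of $\partial H$ to line up. Since $P$ avoids $H$ except at its endpoints, each internal vertex of $P$ is outside $V(H)$; but each edge $e$ of $P$ lies on two facial triangles, and the third vertex of at least one of these triangles is a natural candidate to be ``on the $H$-side'' of $P$.

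**Key steps.**

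First, I would reduce to the case where $P$ is an induced path and $x,y$ are its only vertices in $V(H)$ (otherwise replace $P$ by a subpath, and note that a path in $\partial H$ between two vertices of $\partial H$ can be concatenated). Second, I would consider the closed walk, or rather the cycle $C$, obtained by closing up $P$ with some $x$--$y$ path $Q$ inside $H$ (possible since $H$ is connected); let $D$ be a face of $C$ (a Jordan curve), chosen on the side toward which the facial triangles on $P$'s edges that are not ``used up'' by $P$ itself point. Third — the crux — I would walk along $P$ from $x$ to $y$ and, for each edge $e = uv$ of $P$, examine the facial triangle $T_e$ incident to $e$ on the $D$-side; its apex $a_e$ must lie in $V(H)$ or strictly inside $D$. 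Since we may take $Q$ to hug $\partial H$, a compactness/innermost-region argument should let me choose everything so that these apices actually lie in $\partial H$, and consecutive apices $a_e, a_{e'}$ (for consecutive edges of $P$) are either equal or adjacent in $G$ — indeed adjacent in $\partial H$, because the two facial triangles around the shared vertex of $e$ and $e'$ chain together. Concatenating these apices then yields the desired $x$--$y$ path in $\partial H$. Finiteness of $H$ guarantees the process terminates and stays within a bounded region.

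**Main obstacle.**

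The hard part will be step three: making precise the notion of ``the $H$-side of $P$'' and proving that the apices $a_e$ can be chosen to lie in $\partial H$ rather than merely inside the disc bounded by $C$. The danger is that $P$ wanders into a region of the plane where $H$ sits far away, so the facial triangle on the inner side of $P$ has its apex still outside $H$; one then needs to iterate, peeling off successive ``layers'' of triangles between $P$ and $\partial H$, and argue that after finitely many layers (bounded by $|V(H)|$ or by the area enclosed) one reaches $\partial H$, with adjacency preserved at each step. I expect this to require a careful innermost-cycle argument: take a cycle $C'$ bounding a minimal region between (part of) $P$ and $\partial H$, and show $C'$ contributes a subpath of $\partial H$. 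Handling the endpoints $x,y$ — where $P$ actually touches $H$ — and checking that the pieces concatenate correctly will be the fiddly but routine part. Planarity is essential here and is exactly why, as the authors note, the multi-ended case of \Tr{thm triang} needs this lemma rather than the cycle-space argument behind \Tr{thm lssc}.
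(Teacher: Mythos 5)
There is a genuine gap, and you have in fact located it yourself: the ``main obstacle'' you describe is the whole content of the lemma, and your plan for overcoming it does not work as stated. The facial triangles incident to the edges of $P$ are, in general, nowhere near $H$ --- their apices on the $D$-side need not lie in $V(H)$, and ``taking $Q$ to hug $\partial H$'' does nothing to change the triangles that sit against $P$. Worse, even the local chaining step fails at the first layer: at a shared vertex $v$ of two consecutive edges of $P$, the two apices $a_e,a_{e'}$ are joined only through the fan of triangles around $v$ on the $D$-side, whose intermediate vertices need not lie in $H$ (or anywhere near $\partial H$), so consecutive apices are in general neither equal nor adjacent. The proposed remedy --- peeling off successive layers of triangles between $P$ and $\partial H$ via an innermost-cycle argument --- is exactly the part that would need a real proof, and no concrete mechanism is given for why each layer preserves connectivity, why the process lands precisely on $\partial H$ (rather than overshooting into the interior of $H$, which can have holes), or why adjacency survives the iteration. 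As written, the argument establishes nothing beyond the setup.

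For contrast, the paper works from the $H$-side rather than the $P$-side and avoids all layer-peeling by a single cycle-space computation: form the cycle $C=P\cup Q$, let $\ct$ be the facial triangles on one side $A$ of $C$ having \emph{all three vertices in $H$}, and consider $K=E(C)+\bigoplus_{T\in\ct}E(T)$ in $\cc(G)$. Every edge of $P$ survives in $K$ (its interior vertices are outside $H$, so no triangle of $\ct$ touches them), and a parity check on the two facial triangles of each surviving edge shows that every edge of $K$ outside $P$ has both endpoints in $\partial H$. Decomposing $K$ into disjoint cycles and taking the one through $P$ yields the required path $P'=C'\sm P\subseteq\partial H$ at once. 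If you want to salvage a ``local'' proof in the spirit of your proposal, the right object to walk along is the boundary of the union of the triangles in $\ct$, not the triangles adjacent to $P$; but at that point you are essentially reproving the cycle-space identity by hand.
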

\begin{proof}

Since $H$ is connected, there is an $x$--$y$~path $Q$ in $H$. Then $C:= P \cup Q$ is a cycle (\fig{Fig P Q}). Pick one of the two sides $A$ of $C$ ---i.e.\ one of the two components into which $C$ separates the plane by the Jordan curve theorem--- and let $\ct$ denote the set of all facial triangles of $G$ lying in $A\cup C$ and having all their three vertices in $H$. Let $K$ be the element of $\cc(G)$ defined by the sum $E(C) + \bigoplus_{T\in \ct} E(T)$, where $E(T)$ denotes the edge-set of $T$. (In the example of \fig{Fig P Q}, $K$ happens to be a cycle $P\cup P'$, but in general $K$ will be more complicated if $H$ has `holes'.) Notice that $E(P) \subset K$. Since every element of the cycle space of a finite graph can be written as a disjoint union of edge-sets of cycles  \cite[Proposition~1.9.2.]{DiestelBook05}, and no internal vertex of $P$ is incident with an element of $\ct$, there is a cycle $C'$ containing $P$ such that $E(C') \subset K$. We claim that $P':= C' \sm P \subseteq \partial H$. Since $P'$ is an $x$--$y$~path by definition, this claim proves our statement.

\begin{figure}[htbp]
\vspace*{-5mm}
\centering
\noindent
\begin{overpic}[width=.6\linewidth]{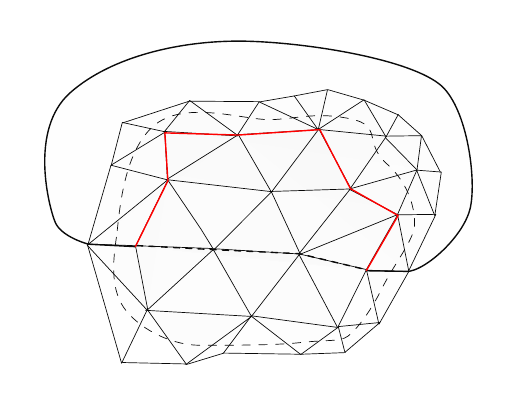}
\put(24,34){$x$}
\put(68,30){$y$}
\put(25.5,30.8){$\bullet$}
\put(70,26.4){$\bullet$}
\put(12,46){$A$}
\put(44,64){$P$}
\put(46,33){$Q$}
\put(49,49){$P'$}
\end{overpic}
\begin{minipage}[c]{0,95\textwidth}
\vspace*{-5mm}
\caption{The situation in the proof of \Lr{lem triang}. The subgraph $H$ is enclosed by the dashed curve, and $\ct$ comprises those triangles enclosed between the bold dashed path $Q$ and the path $P'$ depicted in red, if colour is shown.}
 \label{Fig P Q}
\end{minipage}
\end{figure}

To establish the claim, we will show the stronger $V(K) \sm P \subseteq \partial H$, where $V(K)$ denotes the set of vertices incident with an edge of $K$. To see this, let $e=uv$ be an edge in $K$, and let $T_1,T_2$ be the two facial triangles containing $e$. Notice that $u,v\in V(H)$ by the definitions, so it remains to check that at least one other vertex of $T_1 \cup T_2$ is not in $H$. 

There are two cases. If $e\in E(C)$, then it cannot be that both $T_1,T_2$ are in \ct, and therefore  none of them is in \ct\ since $e\in K$. In this case the vertex $w$ of $T_1 \cup T_2$ inside $A$ is not in $H$ since  $uvw$ is not in \ct. The other case is where $e\not\in E(C)$, and therefore $e$ lies in $A$. Then exactly one of $T_1,T_2$ must be in \ct\ for $e$ to be in $K$, which means that the other $T_i$ contains a vertex not in $H$ as desired.
\end{proof}

We now prove our first main theorem, by repeatedly applying the above lemma to the `bottlenecks' of \Dr{def BP}:

\begin{proof}[Proof of \Tr{thm triang}]
Suppose $G$ is not quasi-isometric to a tree. Given $r \in \N$, Manning's \Tr{Manning thm} says that $G$ does not have the $r$-(BP). Thus we can find two vertices $p,q \in V(G)$ and a $p$--$q$~geodesic $\Gamma$, such that letting $m$ be the  midpoint of $\Gamma$, the ball $B_m(r)$ does not separate $p$ from $q$. In particular, $p,q$ lie outside $B_m(r)$. Let $P$ be a $p$--$q$~path outside $B_m(r)$. 

For every $i\in [1,r]$, we claim that there is a path $P_i$ in $\partial B_m(i)$ joining the two points $p_i,q_i$ of $\Gamma \cap \partial B_m(i)$. Indeed, $(P\cup \Gamma) \sm B_m(i-1)$ contains a $p_i$--$q_i$~path in $(G\sm B_m(i)) \cup \{p_i,q_i\}$, and so our claim follows from \Lr{lem triang}, applied with $H=B_m(i)$. 

Notice that the $P_i, i\in [1,r]$ are pairwise disjoint, they are contained in $B_m(r)$, and $|P_i| \geq d(p_i,q_i)=2i$. Thus $|B_m(r)| \geq \sum_{i\in [1,r]} 2i >r^2$.
\end{proof}

\Tr{thm lssc} can be proved along the same lines, except that instead of \Lr{lem triang} we use the following observation of Timar \cite{TimCut}. We say that $G$ is \defi{$k$-SC} ($k$-simply-connected), if $\cc(G)$ is generated by a set of cycles each of length at most $k$.

\begin{lemma}[{\cite[Theorem 5.1.]{TimCut}}] \label{lem Timar}
Let $G$ be a $k$-SC graph, and let $H$ be a finite connected subgraph of $G$. Suppose two vertices $x,y \in V(H)$ are connected by a path $P$ in $(G \sm H) \cup \{x,y\}$. Then $x,y$ are connected by a path each vertex of which is at distance at most $k/2$ from $\partial H$. 
\end{lemma}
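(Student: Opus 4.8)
We would prove \Lr{lem Timar} along the following lines.

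The plan is to mimic the proof of \Lr{lem triang}, replacing the two-facial-triangles property of a triangulation by the hypothesis that \(G\) is \(k\)-SC: a decomposition of a suitable element of \(\cc(G)\) into cycles of length at most \(k\) will play the role of the facial triangles. First I would fix an \xy~path \(Q\) in \(H\) (it exists since \(H\) is connected) and assume \(P\) has an internal vertex, the case \(|P|=1\) needing only a separate easy argument. Then \(C:=P\cup Q\) is a cycle, so \(E(C)\in\cc(G)\); since \(E(C)\) is finite and \(G\) is \(k\)-SC, I can write \(E(C)=\bigoplus_{D\in\cd}E(D)\) for a finite family \(\cd\) of cycles each of length at most \(k\).

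Next I would split \(\cd\) into the subfamily \(\cd_0\) of those \(D\) with \(V(D)\subseteq V(H)\) and the rest \(\cd_1:=\cd\sm\cd_0\), and put \(K:=\bigoplus_{D\in\cd_1}E(D)=E(C)\oplus\bigoplus_{D\in\cd_0}E(D)\). Every edge of a cycle in \(\cd_0\) has both ends in \(V(H)\), and so does every edge of \(Q\), while no edge of \(P\) does; hence the cancellation leaves \(K=E(P)\sqcup E'\) with \(E'\subseteq E(G[V(H)])\). Now \(K\in\cc(G)\) is a finite edge set and so decomposes into edge-disjoint cycles; every internal vertex of \(P\) lies outside \(V(H)\), hence meets no edge of \(E'\) and has degree exactly \(2\) in \(K\), which forces all of \(P\) onto a single cycle \(C'\) of the decomposition. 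Writing \(C'=P\cup P'\), this \(P'\) is an \xy~path with \(E(P')\subseteq K\sm E(P)=E'\subseteq E(G[V(H)])\), and it will be the path we want.

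Finally I would verify the distance bound. Let \(w\in V(P')\); it lies on some \(e\in E(P')\subseteq K\), hence on some cycle \(D\in\cd_1\), which by definition has a vertex \(z\notin V(H)\). Since \(D\) has length at most \(k\) and contains \(w,z\), we get \(d(w,z)\le k/2\), and since \(w\in V(H)\) but \(z\notin V(H)\) every \(w\)--\(z\)~path meets \(\partial H\), so \(d(w,\partial H)\le d(w,z)\le k/2\). The step I expect to be the main obstacle is pinning down exactly where \(P'\) can run. A priori one must rule out that \(P'\) plunges deep into \(H\) or wanders far from \(H\): the first is handled by cancelling precisely the cycles of \(\cd\) contained in \(V(H)\), and the second rests on the identity \(K=E(P)\sqcup E'\) with \(E'\subseteq E(G[V(H)])\), which confines \(E(P')\) to \(G[V(H)]\) — so any edge of \(K\) far from \(H\) must belong to \(P\), not to \(P'\). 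Establishing this identity, together with the observation that every cycle of \(\cd_1\) contributing an edge to \(P'\) still reaches outside \(V(H)\), is the heart of the argument.
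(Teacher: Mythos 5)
Your proof is correct, and it takes a genuinely different route from the paper: the paper does not prove \Lr{lem Timar} at all, but imports it as Theorem~5.1 of Timar, adding only a parenthetical translation of Timar's formulation (a minimal cut $\Pi$ separating two vertices and a proper bipartition $\Pi_1,\Pi_2$ of $\Pi$ always contain vertices $x_i\in\Pi_i$ with $d(x_1,x_2)\le k/2$) into the form stated here. You instead adapt the proof of \Lr{lem triang} directly, with a decomposition $E(C)=\bigoplus_{D\in\cd}E(D)$ into generating cycles of length at most $k$ playing the role of the facial triangles, and the cycles $D$ with $V(D)\subseteq V(H)$ playing the role of the triangles in $\ct$. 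The steps check out: the identity $K=E(P)\sqcup E'$ with $E'\subseteq E(G[V(H)])$ holds because every edge of $Q$ and of every cancelled cycle has both ends in $V(H)$, while every edge of $P$ has an internal vertex of $P$ (hence a vertex outside $V(H)$) as an end; the degree-two count at internal vertices of $P$ correctly forces all of $P$ onto one cycle $C'$ of the decomposition of $K$ and isolates the complementary arc $P'$; and each vertex of $P'$ lies on an edge of some $D\in\cd_1$, which contains a vertex outside $V(H)$ at distance at most $k/2$ along $D$, whence the bound on the distance to $\partial H$. Your argument has the virtue of making the paper self-contained and of unifying the two lemmas --- with $k=3$ and the facial triangles as generators it even returns $P'\subseteq\partial H$ --- at the cost of losing the planarity-specific freedom to sum only over the triangles on one side of $C$, which is exactly why the conclusion weakens from ``in $\partial H$'' to ``within $k/2$ of $\partial H$''. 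One caveat: the case you set aside, where $P$ is the single edge $xy$, is not an easy special case but a degenerate one in which the statement can actually fail (take $x,y$ adjacent in $G$ but deep inside $H$, so that $P=xy$ never leaves $H$); it should be excluded by reading the hypothesis as requiring $P$ to have an internal vertex outside $H$, which is how the lemma is used throughout the paper.
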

(Timar's formulation is different but equivalent: it says that if $\Pi$ is a minimal cut separating two vertices $u,v$ of \G, and $\Pi_1,\Pi_2$ is a proper bipartition of $\Pi$, then there are vertices $x_i\in \Pi_i$ with $d(x_1,x_2)\leq k/2$. To deduce \Lr{lem Timar} from this formulation, let $u$ be any vertex of $H$, let $v$ be any vertex of $P$, and let $\Pi$ be the set of edges of $G$ between $H$ and the component of $G\sm H$ meeting $P$. The above can be rephrased as saying that any two edges in $\Pi$ can be connected by a sequence of paths, each starting and ending in $\Pi$, and each of length at most $k/2$.)

\begin{proof}[Proof of \Tr{thm lssc}]
Given $r \in \N$, define $p,q,m$ as in the proof of \Tr{thm triang}. For every $i\in [1,r]$, we can apply \Lr{lem Timar} with $H=B_m(i)$ to obtain a path $P_i$ at distance at most $k/2$ from $\partial B_m(i)$ joining the two points $p_i,q_i$ of $P \cap \partial B_m(i)$. Let $P'_j= P_{(k+1)j}, j\in [1,r/(k+2)]$, and notice that the $P'_j$ are pairwise disjoint, they are contained in $B_m(r)$, and $|P'_j| \geq d(p_{(k+1)j},q_{(k+1)j})=2(k+1)j$. Thus $|B_m(r)| \geq \sum_{j\in [1,r/(k+2)]} 2(k+1)j > (k+1) \frac{r^2}{(k+2)^2}$.
\end{proof}

We finish this section with an example showing that the condition of large-scale-simple-connectedness cannot be relaxed in \Tr{thm lssc}. Let $T$ be a tree of uniform volume growth $\Theta(r^{1.5})$, as provided in \cite{BeSchrRec}. Let $T'$ be a copy of $T$. Choose an infinite sequence $\{v_i\}_{i\in \N}$ of leaves of $T$, and identify each $v_i$ with its copy $v'_i$ in $T'$ to obtain a planar graph \G. Notice that the distance between $v_i$ and $v_j$ is the same in each of the three graphs $T, T'$ and \G. It follows that for every $r\in \N$ and $v\in V(T)$, we have $B^G_v(r) \subseteq B^T_v(r) \cup B^{T'}_{v'}(r)$
, i.e.\ a ball of a given radius in \g is at most twice as large as a ball of the same radius in $T$. By choosing the $v_i$ appropriately, e.g.\ so that $d(v_i,v_j)\geq 2^i$ \fe\ $j<i$, we can ensure that \g is not quasi-isometric to a tree.

\subsection{The asymptotic dimension of planar triangulations} \label{sec asdim}

We now prove  \Tr{asdim triang} using some of the ideas of the proof of \Tr{thm triang}.

We recall one of the standard definitions of asymptotic dimension $\asdim(X)$ of a metric space $(X,d)$  from e.g.\ \cite{JorLanGeo}. The reader can think of $X$ as being a graph endowed with its graph distance $d$. We include this definition for the purpose of fixing notation, the reader can consult other sources to gain more intuition.
 
Let \cu\ be a collection of subsets of $(X,d)$ ---usually a cover. For $s > 0$, we say that \cu\ is \defi{$s$-disjoint}, if
$d(U,U') := inf\ \{d(x, x') \mid x \in U, x' \in U'\} \geq s$
whenever $U,U'\in \cu$ are distinct. More generally, we say that \cu\ is \defi{$(n+1,s)$-disjoint}, if $\cu = \bigcup_{i=1}^{n+1} \cu_i$ for subcollections $\cu_i$ each of which is $s$-disjoint. The indices $1,\ldots,n+1$ are the \defi{colours} of \cu. We define the \defi{asymptotic dimension} $\asdim(X)$ as the smallest $n$ \st\ \fe\ $s\in \R_+$ \ti\ an $(n+1,s)$-disjoint cover \cu\ of $X$ with $\sup_{U\in \cu} \diam(U)< \infty$. Here, the diameter $\diam(U)$ is measured \wrt\ the metric $d$ of $X$. We say that \cu\ is \defi{$D$-bounded} for any $D> \sup_{U\in \cu} \diam(U)$. 

The \defi{Assouad-Nagata dimension} $\ANdim(X)$ is defined by putting a stronger restriction on the diameters: we define $\ANdim(X)$  to be the smallest $n$ for which there is $c\in \R$ \st\ \fe\ $s\in \R_+$ \ti\ an $(n+1,s)$-disjoint and $cs$-bounded cover \cu\ of $X$.

\begin{proof}[Proof of \Tr{asdim triang}]
It has been proved that every planar graph has asymptotic dimension at most 2 \cite{BBEGLPS,JorLanGeo}. Thus it only remains to show that $\asdim(G)=1$ implies that $G$ is quasi-isometric to a tree when \g is a planar triangulation.  We will prove the following stronger statement, where  $d$ denotes the graph distance of \G:
\labtequ{colours}{If \g admits a 2-colouring $c: V(G) \to \{0,1\}$ \st\ for some $r\in \N$, every monochromatic connected subgraph of \g has diameter less than $r$ \wrt\ $d$, then \g is a quasi-tree.}
To see that \eqref{colours} is satisfied when $\asdim(G)=1$, let \cu\ be a (2,2)-disjoint and $r$-bounded cover of \G, consisting of subcollections $\cu_1,\cu_2$, and set $c(v)=i$ if $v\in \bigcup \cu_i$. As an exercise, the reader could try to check that the triangular lattice $T$ (i.e.\ the planar triangulation with all vertices having degree 6) does not admit a 2-colouring as in \eqref{colours}, and deduce that $\asdim(T)>1$.

\medskip
To prove \eqref{colours}, we assume such $c$ exists, and we claim that \g has the $R$-BP for $R:=10r$ ---we are being generous--- from which the result follows via Manning's \Tr{Manning thm}.

For if not, then as before we can find two vertices $p,q \in V(G)$ and a $p$--$q$~geodesic $\Gamma$, such that letting $m$ be the  midpoint of $\Gamma$, the ball $B_m(R)$ does not separate $p$ from $q$. Let $P$ be a $p$--$q$~path outside $B_m(R)$. 

A \defi{monochromatic component} is a maximal connected subgraph of \g on which $c$ is constant. Let $C_0$ be the monochromatic component of $m$ in $c$. 
By our assumption, $C_0$ has diameter less than $r$ \wrt\ $d$, and in particular it is contained in $B_m(r)$. 
The idea is to recursively apply \Lr{lem triang} starting from $C_0$, to obtain longer and longer monochromatic components (in alternating colours) surrounding it, contradicting our assumption that monochromatic connected subgraphs of \g have bounded diameters.

To make this precise, let $C_0, C_1, \ldots C_k$ be a longest sequence of  monochromatic components (in alternating colours) with all the following properties. The two \defi{parts of $\Gamma$} are the two components into which $m$ separates $\Gamma$. 
\begin{enumerate}
\item \label{h i} $C_i$ contains a path joining the two parts of $\Gamma$ for all $i>0$;
\item \label{h ii} $C_i$ separates $C_{i-1}$ from $P$ in the subgraph $\Gamma \cup P$ for all $i>0$;
\item \label{h iii} $C_i  \subseteq B_m(9r)$, and
\item \label{h iv} $C_i\cap \Gamma \subseteq B_m(r)$.
\end{enumerate}
Since $C_0$ satisfies these conditions by definition, such a sequence always exists, although it may comprise a single member $C_0=C_k$. Properties \ref{h i} and \ref{h ii} together with the finiteness of $\Gamma$ guarantee that the sequence terminates. 

An example figure can be obtained as follows. Draw a sequence of simple closed curves $S_0, S_1, \ldots S_k$ in the plane, nested inside each other, and all surrounding a point $m$. Triangulate the interior of $S_0$, which we think of as $C_0$. Also triangulate each of the annuli bounded by $S_i$ and $S_{i-1}$, which we think of as $C_i$. Finally, draw a path $\Gamma$ through $m$, with both endpoints $p,q$ outside $S_k$. 

Back to our proof, let $C'_k$ be the set of neighbours of $C_k$. Notice that $C'_k$ is monochromatic, as all its vertices must have the opposite colour of $C_k$ since the latter was a  monochromatic component. Let $H:= C_k \cup C'_k$. Applying \Lr{lem triang}, which we can because $C_k$ satisfies \ref{h iii}, and so $H$ avoids $P$, we obtain a path $M \subset  \partial H \subseteq C'_k$ joining the two parts of $\Gamma$. Let $C_{k+1}$ be the monochromatic component  containing $M$,  which exists since $M \subseteq C'_k$ is monochromatic. Then $C_{k+1}$ satisfies \ref{h i} because it contains $M$, and it satisfies \ref{h ii} because $C'_k$ separates $C_{k}$ from $P$ in $\Gamma \cup P$. Notice that $d(m, M)\leq r+1$ because $C_k$ satisfies \ref{h iv} and $M \subseteq C'_k$. Therefore, if $C_{k+1} \supseteq M$ violates \ref{h iii}, then its diameter is at least $8r$ by the triangle inequality, contradicting our assumption.  If $C_{k+1}$ violates \ref{h iv}, then again this contradicts our assumption that $\diam(C_{k+1})<r$, because $C_{k+1}$ meets both parts of $\Gamma$ and $\Gamma$ is a geodesic through $m$.
\end{proof}

{\bf Remark:} Our proof is similar to that of Fujiwara \& Whyte, who proved that \LSSC\ graphs of asymptotic dimension 1 are quasi-trees \cite{FujWhyNot}. In fact our proof can be adapted to yield their result: If $G$ is $k$-SC, then its $k$th power $G^k$ is $3$-SC. We can apply the above proof to $G^k$, replacing \Lr{lem triang} by \Lr{lem Timar}. 

\medskip
It is a consequence of \Tr{asdim triang} that when \g is a planar triangulation, $\asdim(G)$ equals the Assouad-Nagata dimension $\ANdim(G)$. Indeed, it is well-known that $\ANdim(T)=1$ \fe\  tree $T$, and $\ANdim$ is invariant under quasi-isometry by definition. We now present an example showing that this property of planar triangulations does not extend to all planar graphs: we construct a planar graph \g with $\asdim(G)=1$ and $\ANdim(G)=2$. 

This graph is constructed as follows. For every $n\in \N$, let $H_n$ be the $n \times n$ square grid, i.e. the Cartesian product of the path of length $n$ with itself. Obtain $G_n$ from $H_n$ by subdividing each edge $n$ times. Let $G$ be the graph obtained from the union of all the $G_n$ after joining one vertex of $G_n$ with one vertex of $G_{n+1}$ with a path of length $n$ \fe\ $n\in \N$; it will not matter which vertex. 

We claim that $\ANdim(G)>1$. For if $\ANdim(G)=1$, then there is $c\in \R$ \st\ \fe\ $s\in \N$, there is an $(2,s)$-disjoint and $cs$-bounded cover of $G$ as defined in \Sr{sec asdim}. For every $n$, we then have a $(2,ns)$-disjoint and $cns$-bounded cover $\cu_n$ of $G$. The restriction of $\cu_n$ to $G_n$ yields a $(2,s)$-disjoint and $cs$-bounded cover $\cu'_n$ of $H_n$, because all distances in $G_n$ are $n$ times larger than corresponding distances in $H_n$. Using a standard compactness argument we can find a subsequence of the $\cu'_n$ converging to a $(2,s)$-disjoint and $cs$-bounded cover of the infinite square lattice $\Z^2$, contradicting the fact that $\ANdim(\Z^2)=2$. (A similar idea appears in \cite{BBEGLPS}.)

It remains to check that $\asdim(G)=1$. Given $s\in \R$, we can construct a $(2,s)$-disjoint cover $\cu= \cu_1 \cup \cu_2$ of \g as follows. Put all $G_n$'s with $n\leq 4s$ in one element $U$ of $\cu_1$. For the larger $G_n$'s, start with a proper 2-colouring of $H_n$ with colours $\{1,2\}$, and extend it to $G_n$ as follows. Colour each vertex $v$ of degree 2 with the colour of the nearest vertex $v'$ of degree 4 (inherited from the colouring of $H_n$) whenever $d(v,v')\leq s$. The yet uncoloured vertices form a family of paths of length at least $2s$ each. Easily, each such path $P$ can be decomposed into an even number of subpaths each of length between $s$ and $2s$. We colour these subpaths in alternating colours, so that each endvertex of $P$ receives the opposite colour of its neighbour outside $P$. We complete the definition of \cu\ by putting each monochromatic component of colour $i$ in $\cu_i$. It is straightforward to check that \cu\ is a $(2,s)$-disjoint cover of \G, and that $\sup_{U\in \cu} \diam(U)< \infty$. Thus  $\asdim(G)=1$ as claimed.

\comment{ 
Let $G$ be a planar triangulation of uniform subquadratic volume growth. By \Tr{thm triang} $G$ is quasi-isometric with a tree $T$, and since \g must have bounded degrees, it is easy to see that we can choose $T$ to be of bounded degree too. It is a well-known theorem of Lyons \cite{} that trees of polynomial growth have no percolation phase transition, i.e.\ their $p_c$ equals 1. It is also well known that the property $p_c = 1$ is   invariant under quasi-isometries between bounded degree graphs \cite{}. Thus we deduce $p_c(G)=1$. On the other hand, it is proved in \cite{analyticity}[Theorem 11.1.] that if \g satisfies a 

\begin{corollary} \label{cor cutsets}
Let $G$ be a planar triangulation \st\ $|B_v(r)|= O(r^\alpha)$ uniformly for all $v\in V(G)$ for $\alpha<2$.  Then \g has subgraphs $\{H_i\}_{i\in \N}$ \st\ $\lim \frac{|\partial H_i|}{\log |H_i|} = 0$.
\end{corollary}
}

\section{Manifolds}
In this section we prove continuous analogues of the above results on planar triangulations. We will state our results for Riemannian 2-manifolds, although our proofs apply more generally to any topological 2-manifold endowed with a metric that turns it into a length space, in particular to any Finsler 2-manifold. 
A 2-manifold is \defi{planar}, if it is homeomorphic to a subspace of the 2-sphere $\BS^2$. For a subset $X$ of a manifold we define $Area(X)$ to be its 2-dimensional Hausdorff measure $\mathcal{H}^2(X)$. (This definition generalises the Riemannian area.) As usual we denote by $B_p(r)$ the ball of radius $r$ centered at a point $p$. The main result of this section is

\begin{theorem} \label{thm surfaces}
Let $M$ be a connected, complete, planar, Riemannian 2-manifold. Then either $M$ is quasi-isometric to a tree, or for every $r \in \R_+$ there is $p \in M$ such that $Area(B_p(r)) > r^2/8$.
\end{theorem}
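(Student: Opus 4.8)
The plan is to mimic the proof of \Tr{thm triang}, using a topological separation lemma in place of the combinatorial \Lr{lem triang} and a co-area estimate in place of counting vertices. First I would identify $M$ with an open subset of $\BS^2$: since $M$ is a $2$-manifold homeomorphic to a subspace of $\BS^2$, invariance of domain makes that subspace open in $\BS^2$, while $M$ retains its (abstract) complete Riemannian structure. By the Hopf--Rinow theorem $M$ is then proper and geodesic, so closed metric balls $\bar B_p(t)$ are compact and any two points are joined by a minimizing geodesic; I may also assume $M$ is non-compact, since a compact length space is quasi-isometric to a point and hence to a tree.

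The heart of the argument is the following continuous analogue of \Lr{lem triang}: \emph{if $H\subseteq M$ is a Peano continuum (i.e.\ compact, connected and locally connected) and $x,y\in H$ are joined by an arc $P$ whose interior avoids $H$, then $x,y$ are joined by an arc in $\partial H$}. I would prove it as follows. Let $V$ be the component of $\BS^2\setminus H$ containing the connected set $P\setminus\{x,y\}$; then $x,y\in\overline V\setminus V=\partial V$, and $\partial V\subseteq\partial H$ because $V$ is a component of the open set $\BS^2\setminus H$. Since $H$ is connected, $V$ is simply connected: in any partition of the closed set $\BS^2\setminus V$ into two nonempty closed pieces, the piece not meeting $H$ is a union of components of the open set $\BS^2\setminus H$, hence clopen in $\BS^2$ --- a contradiction. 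By a classical theorem of Torhorst, the boundary of a complementary domain of a locally connected continuum in $\BS^2$ is again a locally connected continuum; thus $\partial V$ is a Peano continuum, and being non-degenerate (it contains $x\neq y$) it is arcwise connected, so it contains an arc from $x$ to $y$.

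To apply this with $H=\bar B_m(t)$, note that $\bar B_m(t)$ is the image of the continuous map $\BS^1\times[0,t]\to M$, $(v,s)\mapsto\exp_m(sv)$ --- surjectivity onto $\bar B_m(t)$ is again Hopf--Rinow --- hence a continuous image of a Peano continuum and therefore itself a Peano continuum. Moreover $\partial\bar B_m(t)\subseteq S_m(t):=\{z\in M:d(m,z)=t\}$, since the open ball $\{d(m,\cdot)<t\}$ lies in the interior of $\bar B_m(t)$. Now suppose $M$ is not quasi-isometric to a tree and fix $r\in\R_+$. By \Tr{Manning LS} $M$ fails the $r$-(BP), so, exactly as in the proof of \Tr{thm triang}, there are $p,q\in M$, a unit-speed minimizing geodesic $\Gamma\colon[0,L]\to M$ from $p$ to $q$ with midpoint $m=\Gamma(L/2)$, and a path $P_0$ from $p$ to $q$ avoiding $B_m(r)$; this forces $L=d(p,q)\geq 2r$. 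For $t\in(0,r)$ set $x_t=\Gamma(L/2-t)$ and $y_t=\Gamma(L/2+t)$, so $d(m,x_t)=d(m,y_t)=t$ and $d(x_t,y_t)=2t$. The concatenation of $\Gamma|_{[0,L/2-t]}$ reversed, $P_0$, and $\Gamma|_{[L/2+t,L]}$ is a path from $x_t$ to $y_t$ whose interior avoids $\bar B_m(t)$ (on the two geodesic pieces $d(m,\cdot)>t$, on $P_0$ we have $d(m,\cdot)\geq r>t$), hence contains an arc $P'_t$ from $x_t$ to $y_t$ meeting $\bar B_m(t)$ only at its endpoints. Applying the lemma with $H=\bar B_m(t)$ yields an arc $\sigma_t$ from $x_t$ to $y_t$ inside $\partial\bar B_m(t)\subseteq S_m(t)$; as a connected set containing $x_t$ and $y_t$ it satisfies $\mathcal H^1(\sigma_t)\geq\diam(\sigma_t)\geq d(x_t,y_t)=2t$, so $\mathcal H^1(S_m(t))\geq 2t$ for every $t\in(0,r)$.

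Finally, the co-area inequality applied to the $1$-Lipschitz function $d(m,\cdot)$ (equivalently, the Riemannian co-area formula together with $|\nabla d|=1$ a.e.) gives $Area(B_m(r))\geq c\int_0^r\mathcal H^1(S_m(t))\,dt\geq c\int_0^r 2t\,dt=c\,r^2$ for an absolute constant $c$ --- one may take $c=1$ from the Riemannian formula, or $c=\pi/4$ from Eilenberg's inequality --- which is comfortably larger than $r^2/8$. The only genuinely new work compared with \Tr{thm triang} is the topological lemma together with the observation that $\bar B_m(t)$ is a Peano continuum; I expect establishing the lemma --- and, for the length-space generalization mentioned after the theorem, finding a substitute for the $\exp_m$ argument to see that closed balls remain locally connected --- to be the main obstacle, the rest being the same bookkeeping as before with co-area replacing the vertex count.
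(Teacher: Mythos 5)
Your proof is correct, but it reaches \Tr{thm surfaces} by a genuinely different route in both of the places where the paper does real work. For the separation step, the paper's \Lr{lem con surf} handles an \emph{arbitrary} compact connected $K$ by discretizing: it triangulates the surface with mesh $\epsilon$ and applies the combinatorial \Lr{lem triang}, which is why it can only deliver an arc in the $\epsilon$-neighbourhood of $\partial K$ (as its remark about the pseudo-arc explains, an arc in $\partial K$ itself need not exist in that generality). You instead upgrade the hypothesis --- observing via Hopf--Rinow and the exponential map that closed balls are Peano continua --- and then invoke Torhorst's theorem to get an honest arc inside $\partial \bar B_m(t)\subseteq S_m(t)$; this is clean and correct, though it is the one point where you use the Riemannian structure beyond its being a length space, whereas the paper's discretization is what lets its argument go through for topological $2$-manifolds with a length metric (which is needed later when discs are glued in to deduce \Cr{cor manifold}). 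For the area step, the paper avoids the co-area machinery altogether: its \Lr{lem arcs} is an elementary counting argument straight from the definition of Hausdorff measure, applied to the family of arcs indexed by $t\in(0,r/2)$ with endpoint-separation $L=r$, yielding the constant $r^2/8$; your appeal to the Eilenberg/co-area inequality for the $1$-Lipschitz function $d(m,\cdot)$, combined with $\mathcal{H}^1(\sigma_t)\geq\diam(\sigma_t)\geq 2t$, is a legitimate substitute and even gives the better bound $\Omega(r^2)$ with constant $\pi/4$ (or $1$), at the cost of importing a nontrivial theorem from geometric measure theory. In short: same skeleton (Manning's theorem, a geodesic with a bypassed midpoint, a one-parameter family of long arcs on distance spheres), but your two key lemmas are replaced by classical black boxes where the paper supplies self-contained, more general elementary arguments.
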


The proof of this follows the lines of the proof of \Tr{thm triang}. We will need the following lemma which is the analogue of \Lr{lem triang}, and will play a similar role in our proof. 

\begin{lemma} \label{lem con surf}
Let $S$ be a planar 2-manifold, and let $K\subseteq S$ be  a connected and compact subspace. Suppose $x,y \in \partial K$ are connected by an arc $P$ in $(S \sm K) \cup \{x,y\}$. Then \fe\ $\epsilon>0$, $x,y$ are connected by an arc contained in the $\epsilon$-neighbourhood of $\partial K$. 
\end{lemma}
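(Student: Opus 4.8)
\textbf{Proof proposal for \Lr{lem con surf}.}

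The plan is to transfer the problem from the manifold $S$ to a planar triangulation, where \Lr{lem triang} applies, and then to push the resulting combinatorial path back into $S$ so that it lands in the prescribed $\epsilon$-neighbourhood of $\partial K$. First I would fix $\epsilon>0$ and work inside a compact set: since $K$ is compact and $P$ is the image of a compact interval, the union $K\cup P$ is compact, and I can choose a compact 2-submanifold-with-boundary $S'\subseteq S$ (planar, as a subspace of $\BS^2$) containing an open neighbourhood of $K\cup P$, small enough that every point of $S'\sm \partial K$ lies within $\epsilon/2$ of some point of... no — rather, I would instead take a fine triangulation of $S'$ (which exists since every surface is triangulable, and a planar compact surface-with-boundary has a finite such triangulation) with mesh less than $\epsilon/3$ in the metric of $S$, i.e.\ every simplex has diameter $<\epsilon/3$. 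Call the resulting (finite, planar) triangulation graph $G$; we may assume $G$ is a planar triangulation in the sense of \Sr{def basic} after discarding the boundary faces or capping them off, or simply argue directly with \Lr{lem triang}'s proof, which only uses faces near the relevant cycle.

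Next I would replace $K$ by a combinatorial object: let $H$ be the subgraph of $G$ spanned by all vertices, edges and faces of the triangulation that meet $K$; since $K$ is connected and the mesh is small, $H$ is connected, and $\partial H$ (in the graph sense) consists of simplices meeting $K$ but adjacent to simplices disjoint from $K$ — in particular every vertex of $\partial H$ lies within $\epsilon/3$ of $\partial K$. Likewise, approximate the endpoints $x,y\in\partial K$ by nearby vertices $x',y'$ of $\partial H$ (within $\epsilon/3$), and approximate the arc $P$: since $P$ runs in $(S\sm K)\cup\{x,y\}$, after possibly refining the triangulation so that a neighbourhood of $P$ avoids $K$ except near the endpoints, there is an $x'$–$y'$ path $P'$ in $G$ whose vertices lie in $(G\sm H)\cup\{x',y'\}$ (follow the sequence of faces crossed by $P$). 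Now \Lr{lem triang} applies to $G$, $H$, $x'$, $y'$, $P'$ and yields an $x'$–$y'$ path $Q$ in $\partial H$. Finally I would realise $Q$ as an arc in $S$: its edges are edges of the triangulation, hence are arcs in $S'$, and concatenating them gives a path from $x'$ to $y'$ in $S$ lying in the union of simplices meeting $\partial H$, which is contained in the $\epsilon/3 + \epsilon/3 < \epsilon$ neighbourhood of $\partial K$; prepending the short arcs from $x$ to $x'$ and from $y'$ to $y$ (each of length $<\epsilon/3$ and realisable inside a small ball) keeps us inside the $\epsilon$-neighbourhood. Passing from this topological $x$–$y$ path to an actual arc is routine in a Hausdorff space (a path contains an arc between its endpoints), so this finishes the argument.

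The main obstacle I anticipate is the approximation bookkeeping in the middle step: making rigorous the claim that $P$ can be tracked by a combinatorial path $P'$ staying outside $H$ while its endpoints attach to $\partial H$. The subtlety is that $P$ may graze $K$ or $\partial K$ transversally many times; one needs to choose the triangulation (or a subdivision of it) fine enough and possibly perturb $P$ slightly — using that $S\sm K$ is open and $P\sm\{x,y\}\subseteq S\sm K$ is compact, hence at positive distance from $K$ — so that a tubular neighbourhood of $P\sm\{x,y\}$ misses $K$ entirely, and then $P'$ can be taken in that neighbourhood. A secondary, more technical point is justifying that the finite triangulation of the compact planar surface-with-boundary $S'$ can be taken with arbitrarily small mesh with respect to the ambient metric $d$; this follows from compactness (barycentric subdivision drives simplex diameters to zero uniformly, by the Lebesgue number lemma applied to a cover by small metric balls), but it should be stated carefully. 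Everything else — connectedness of $H$, the location of $\partial H$ near $\partial K$, and the final concatenation — is straightforward once the mesh is controlled.
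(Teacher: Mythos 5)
Your proposal follows essentially the same route as the paper's proof: triangulate the surface with mesh less than $\epsilon$ (the paper triangulates all of $S$ rather than first passing to a compact piece, but compactness of $K$ still makes $H$ finite), let $H$ be the subgraph coming from the triangles meeting $K$, approximate $x$, $y$ and $P$ combinatorially, apply \Lr{lem triang}, and reattach the endpoints through the triangles containing $x$ and $y$. One small slip in your final paragraph: $P\sm\{x,y\}$ is not compact and is not at positive distance from $K$ (its closure contains $x,y\in\partial K$), so that particular fix for the grazing issue needs rewording, though the paper's own proof treats this approximation step equally informally.
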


{\bf Remark:} We have to consider a neighbourhood of $\partial K$ here, as we cannot find an \arc{x}{y}\ in $\partial K$ if $K$ is too fractal. For example, $K$ could be a pseudo-arc in $S= \R^2$, in which case $\partial K=K$ contains no non-trivial arc.
\begin{proof}[Proof of \Lr{lem con surf}]
It is well-known that $S$ admits a triangulation $T$ \cite[I.~46]{AhlSarRie}. Given $\epsilon>0$, we can subdivide the triangles of $T$ if necessary to obtain a triangulation $T_\epsilon$ of $S$ in which every triangle (i.e.\ 2-cell)  has diameter at most $\epsilon$ \wrt\ the metric of $S$. Let $G=G(\epsilon)$ denote the 1-skeleton of $T_\epsilon$, and note that $G$ is a planar triangulation by definition. 

Let $H\subset G$ denote the subgraph of $G$ consisting of the boundaries of the triangles of $T_\epsilon$ intersecting $K$. Since $K$ is connected, so is $H$, and since $K$ is compact, $H$ is finite. Let $x',y'$ be vertices of $\partial H$ in the boundary of triangles $\Delta_x, \Delta_y$ containing $x,y$ respectively, and let $P'$ be an \pth{x'}{y'} in $G$ at distance at most $\epsilon$ from $P$, which can be found inside the union of the triangles of $T_\epsilon$ intersecting $P$. Applying \Lr{lem triang} we obtain an \pth{x'}{y'} $Q_\epsilon$ in $\partial H$. Notice that $\partial H \cap K=\emptyset$, and $\partial H$ is contained in the $\epsilon$-neighbourhood of $K$. Thus $Q_\epsilon \subseteq \partial H$ is contained in the $\epsilon$-neighbourhood of $\partial K$. We extend $Q_\epsilon$ by an \arc{x}{x'} in $\Delta_x$ and a \arc{y}{y'} in  $ \Delta_y$ to obtain the desired \arc{x}{y}.
\end{proof}

In the proof of \Tr{thm triang} we used \Lr{lem triang} to find a sequence of paths $Q_i$ joining points at distance $2i$, and each contained in the boundary of a ball with a fixed center. We will argue analogously in the proof of \Tr{thm surfaces}, and the following lemma will be used to show that the union of the paths that \Lr{lem con surf} provides has large total area. 

\begin{lemma} \label{lem arcs}
Let  $\{P_t^\epsilon, t\in (0,T), \epsilon \in (0,1)\}$ be a family of arcs in a metric space $(X,d)$, and let $p_t^\epsilon, q_t^\epsilon$ denote the two endpoints of $P_t^\epsilon$. Suppose that for some $L>0$, and every $\epsilon \in (0,1)$, we have 
\begin{enumerate}
\item \label{i}  $d(p_t^\epsilon, q_t^\epsilon)\geq L$ \fe\ $t \in (0,T)$, and
\item \label{ii} $d(P_t^\epsilon,P_s^\epsilon) \geq |s-t| - \epsilon$ \fe\ $s,t \in (0,T)$.
\end{enumerate}
Then $\mathcal{H}^2(X) \geq  LT/4$. 
\end{lemma}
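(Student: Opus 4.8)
The plan is to bound $\mathcal{H}^2(X)$ from below by estimating, for an arbitrary countable cover $\{U_i\}$ of $X$ by sets of small diameter, the quantity $\sum_i \diam(U_i)^2$ — a ``co-area'' count across the parameter $t$. Fix $\epsilon\in(0,1)$. First I would establish two ingredients. Ingredient~A: for each $t\in(0,T)$, the members of the cover meeting $P_t^\epsilon$ have total diameter at least $L$. Indeed, the sets $U_i$ with $U_i\cap P_t^\epsilon\neq\emptyset$ cover the arc $P_t^\epsilon$, whose endpoints lie at distance $\geq L$ by property (i); composing with the $1$-Lipschitz surjection $z\mapsto\min(d(p_t^\epsilon,z),L)$ of $P_t^\epsilon$ onto $[0,L]$ (onto because $P_t^\epsilon$ is connected and its endpoints map to $0$ and $L$) turns this into a cover of $[0,L]$ by sets of no larger diameter, so $\sum_{i\,:\,U_i\cap P_t^\epsilon\neq\emptyset}\diam(U_i)\geq\mathcal{L}^1([0,L])=L$. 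Ingredient~B: if $U_i$ meets both $P_s^\epsilon$ and $P_t^\epsilon$ then $\diam(U_i)\geq d(P_s^\epsilon,P_t^\epsilon)\geq|s-t|-\epsilon$ by property (ii), so the set $I_i:=\{t:U_i\cap P_t^\epsilon\neq\emptyset\}$ has diameter at most $\diam(U_i)+\epsilon$ and is therefore contained in a closed interval $J_i$ of that length.

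Then I would combine the two ingredients by integrating over $t$. For every $t\in(0,T)$ we have $L\leq\sum_{i\,:\,t\in I_i}\diam(U_i)\leq\sum_i\diam(U_i)\,\one_{J_i}(t)$, and the right-hand side is a non-negative Borel function of $t$; integrating on $(0,T)$ and using Tonelli,
\[ LT \;\leq\; \int_0^T\sum_i\diam(U_i)\,\one_{J_i}(t)\,dt \;=\; \sum_i\diam(U_i)\,|J_i\cap(0,T)| \;\leq\; \sum_i\diam(U_i)\big(\diam(U_i)+\epsilon\big). \]
This holds for the fixed cover $\{U_i\}$ and every $\epsilon\in(0,1)$, since properties (i)--(ii) are assumed for every such $\epsilon$. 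Letting $\epsilon\to 0$ — a step I return to below — yields $\sum_i\diam(U_i)^2\geq LT$, and taking the infimum over covers gives $\mathcal{H}^2(X)\geq LT$ (or $\tfrac{\pi}{4}LT$ if one includes the constant that normalises $\mathcal{H}^2$ to agree with area on $\R^2$), in any case comfortably more than $LT/4$.

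The step I expect to be the main obstacle is exactly this passage $\epsilon\to 0$: it is immediate only when $\sum_i\diam(U_i)<\infty$, whereas for a general cover — even a natural cover of a Euclidean rectangle by tiny squares — this sum diverges while $\sum_i\diam(U_i)^2$ stays bounded. I would handle it in one of two ways. Either reduce to the case that $X$ is compact, in which covers may be taken finite so that $\epsilon\to 0$ is harmless; this suffices for all our applications, since the balls $B_p(r)$ (and their closures) in a complete Riemannian manifold are compact by Hopf--Rinow. Or restrict attention to covers whose members have comparable diameters, which alters the Hausdorff measure only by a universal constant — precisely the slack absorbed by the factor $\tfrac14$ in the statement.

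As an alternative route that avoids covers altogether, I would fix a small $\epsilon$ and build a $1$-Lipschitz map $g=(g_1,g_2)\colon X\to(\R^2,\|\cdot\|_\infty)$ with first coordinate $g_1(x)=\inf_{t\in(0,T)}\big(t+d(x,P_t^\epsilon)\big)$ — which on $P_s^\epsilon$ takes values within $\epsilon$ of $s$, by property (ii) used exactly as in Ingredient~B — and a second coordinate recording position along the arc (via the maps $z\mapsto\min(d(p_t^\epsilon,z),L)$ of Ingredient~A, amalgamated with a suitable penalty $-\,d(x,P_t^\epsilon)$); one then checks that $g(X)$ contains essentially the rectangle $[0,T]\times[0,L]$ and concludes $\mathcal{H}^2(X)\geq\mathcal{H}^2_{\|\cdot\|_\infty}(g(X))=\tfrac{\pi}{4}\,TL$ from the standard contraction behaviour of Hausdorff measure under $1$-Lipschitz maps. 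In either route the geometric content is the elementary count in Ingredients~A and~B; the real work is the bookkeeping around $\epsilon$ (the limit above, or the endpoint behaviour of $g_2$ caused by arcs with $|s-t|<\epsilon$ overlapping).
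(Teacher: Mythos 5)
Your main route is correct and genuinely different from the one in the paper. The paper discretises both directions: it picks $\sim N/\delta$ arcs $P_{t_i}^{(\delta/N)^2}$ with parameter spacing $T\delta/N$ and, on each, $\sim N/\delta$ points spaced $L\delta/N$ apart, and then counts how many of the resulting grid points a set of diameter $<\delta$ can contain. Your argument instead keeps $t$ continuous, integrating the pointwise bound of Ingredient~A against the width bound of Ingredient~B, and replaces the choice of points along each arc by the $1$-Lipschitz projection onto $[0,L]$. Both proofs rest on exactly the same two estimates; yours is cleaner where it applies (it yields the constant $1$ rather than $1/4$, and avoids the edge case, present in the paper's count too, of a very small set containing a single grid point), while the paper's discretisation sidesteps your $\epsilon\to 0$ problem by fixing $\epsilon=(\delta/N)^2$ once and for all, far below the grid spacing, so that only finitely many arcs are ever in play. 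Your first patch --- pass to finite covers, which is legitimate for compact $X$ after fattening the $U_i$ to open sets, and which suffices here because the lemma is only ever applied to closed balls in a complete locally compact length space --- is the right way to close the gap; one should then either add compactness of $X$ to the hypotheses or say explicitly that the infimum is taken over finite covers.

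Your second patch does not work and should be dropped: restricting to covers whose members have comparable diameters shrinks the class of admissible covers and hence can only increase the infimum defining $\mathcal{H}^2_\delta$, so a lower bound proved only for that restricted class says nothing about $\mathcal{H}^2(X)$; moreover the restricted quantity is a box-counting (Minkowski-type) content, which is not comparable to Hausdorff measure by any universal constant. The alternative route via a $1$-Lipschitz map $g\colon X\to\R^2$ is plausible in outline (it is essentially Eilenberg's coarea inequality), but the construction of the second coordinate and the treatment of arcs with $|s-t|<\epsilon$ are exactly where the difficulty resides, so as written it should not be counted as a proof.
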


For example, $X$ could be a rectangle with side lengths $L,T$, and $P_t^\epsilon$ could be the horizontal arc at height $t$ perturbed within distance $\epsilon$.

\begin{proof}[Proof of \Lr{lem arcs}]

This is a rather straightforward application of the definition of Hausdorff measure. Recall that $\mathcal{H}^2(X)$ is defined as $\sup_{\delta \to 0} \mathcal{H}^2_\delta(X)$, where\\ $\mathcal{H}^2_\delta(X):= \inf \{ \sum \diam(U_i)^2 \}$, the infimum ranging over all countable covers $\{ U_i\}$ of $X$ satisfying $\diam(U_i)< \delta$ \fe\ $i$.

Given $\delta>0$, the idea is to choose a finite subfamily of our arcs $\{P_t^\epsilon\}$ whose pairwise distance is at least some increment $c$, and within each arc to choose a sequence of points obeying the same increment $c$, so that any set $U_i$ used to cover $X$ can only contain $O(\diam(U_i)^2/c^2)$ many of these points as $c\to 0$. Hereby we choose $\epsilon$ much smaller than $c$, e.g.\ $\epsilon= c^2$. 
As we have $\Omega({LT/c^2})$ points to cover in total, this provides the desired lower bound on $\mathcal{H}^2_\delta(X)$. 

To make this more precise, given $\delta<1/2$, choose $N>>1/\delta$,  let \\ $t_i:= i T \delta/N, i\in 1,2,\ldots \floor{\frac{N}{\delta}}$, and let $P_i:= P_{t_i}^{{(\delta/N)}^2}$. Notice that $d(P_i,P_j) \geq T (\delta/N |i-j| - ({\delta/N})^2)$ by \ref{ii}, and therefore any set $U$ with $\diam(U)< \delta$ meets at most $2\diam(U) N/\delta$ of the $P_i$'s. For each $i$, let $p_i^j, j\in 1,2, \ldots \floor{\frac{N}{\delta}}$ be a point on $P_i$ at distance $j L  \delta/N$ from a fixed endpoint of $P_i$, which exists by  \ref{i}. The triangle inequality implies that if $\diam(U)< \delta$ then $U$ meets at most $2 \diam(U) N/\delta$ of the $p_i^j$'s for any fixed $i$. Combined with the above remark, we deduce that $U$ meets at most $4 (\diam(U) N/\delta)^2$ points $p_i^j$ for any $i$ and $j$. Since we have at least $\frac{N^2}{\delta^2}$ points to cover in total, we deduce
$$\sum_{U \in \cu} 4 (\diam(U) N/\delta)^2 \geq \frac{N^2}{\delta^2}$$
for every cover $\cu$ of $X$ with diameters bounded by $\delta$, and so $\sum_{U \in \cu} (\diam(U))^2 \geq 1/4$. Thus $\mathcal{H}^2_\delta(X)\geq 1/4$.
\end{proof}

We are now ready to prove the main result of this section.
\begin{proof}[Proof of \Tr{thm surfaces}]
We follow the lines of the proof of \Tr{thm triang}. If $M$ is not quasi-isometric to a tree, then for every $r \in \R$, \Tr{Manning LS} provides two points $p,q \in M$ and a $p$--$q$~arc $\Gamma$ of length arbitrarily close to $d(p,q)$, such that letting $m$ be the  midpoint of $\Gamma$, the ball $B_m(r)$ does not separate $p$ from $q$. Let $P$ be a $p$--$q$~path outside $B_m(r)$. Applying \Lr{lem con surf} with $K=K(s):= B_m(s) $ for $s\in (r/2,r)$ we obtain a family  
$\{P_t^\epsilon, t\in (0,r/2), \epsilon \in (0,1)\}$ of arcs each joining two points of $\Gamma$ outside $B_m(r/2)$, which are thus at distance at least $r$ from each other. Here we used the standard fact that every closed bounded subspace of a complete, locally compact, length space is compact. Applying  \Lr{lem arcs} to this family we deduce $\mathcal{H}^2(B_m(r)) \geq r/8$. 
\end{proof}

We can now deduce \Cr{cor manifold} from \Tr{thm surfaces} as follows. We first perform a finite number of surgery operations along non-contractible, rectifiable, loops of $M$ to produce a 2-manifold $M'$ of genus 0 no ball of which has larger area than a ball of the same radius in $M$. To maintain completeness, we glue a disc of finite area along each boundary component we created, to obtain a complete planar surface $M''$. This has a significant influence on the volume growth for small radii only. \Tr{thm surfaces} thus yields that $M''$ is a quasi-tree, hence so is $M$ since it is quasi-isometric to $M''$. (We do not need to worry about losing smoothness when glueing in the aforementioned discs, because as mentioned at the beginning of this section, our proof of   \Tr{thm surfaces} only requires $M''$ to be a length space that is a topological 2-manifold, not necessarily a Riemannian one.)

\medskip
We remark that the completeness assumption cannot be dropped in \Tr{thm surfaces}: one can construct a planar manifold quasi-isometric to the graph obtained from an 1-way infinite path by replacing its $i$th edge with a cycle of length $i$. Such a planar graph is not a quasi-tree, and it has linear growth.

\section{Growth of groups} \label{sec groups}

As a corollary of \Tr{thm lssc}, we obtain an alternative proof that there is no finitely presented group of superlinear but subquadratic growth without using Gromov's theorem. For this we do not need to use Manning's theorem:

\begin{corollary} \label{cor VT}
There is no $k$-SC vertex-transitive graph \g of superlinear but subquadratic growth for any $k\in \N$.
\end{corollary}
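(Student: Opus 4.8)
The plan is to combine \Tr{thm lssc} with the known fact that a vertex-transitive graph has uniform volume growth, so that the conclusion ``for every $r$ there is a vertex $v$ with $|B_v(r)| \geq c(k) r^2$'' becomes ``$|B_v(r)| \geq c(k) r^2$ for \emph{every} vertex $v$''. Indeed, suppose for contradiction that $G$ is a $k$-SC vertex-transitive graph of superlinear but subquadratic growth. Since $G$ has subquadratic growth, it is in particular not quasi-isometric to a tree of any kind that would force linear growth; but more importantly, subquadratic growth means $|B_v(r)| = o(r^2)$, so the conclusion ``for every $r$ there is $v$ with $|B_v(r)| \geq c(k) r^2$'' of \Tr{thm lssc} must fail for $r$ large. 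Hence \Tr{thm lssc} forces $G$ to be quasi-isometric to a tree. The remaining step is to observe that a vertex-transitive quasi-tree has either linear or at least exponential growth, never strictly in between, contradicting the assumption of superlinear subquadratic growth.

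So the key steps, in order, are: (i) assume $G$ is $k$-SC, vertex-transitive, with $|B_v(r)|$ superlinear and subquadratic (note that by vertex-transitivity this growth function does not depend on $v$); (ii) apply \Tr{thm lssc}: the second alternative would give $|B_v(r)| \geq c(k) r^2$ for suitable $v$ and all $r$, which for large $r$ contradicts subquadratic growth, so $G$ must be quasi-isometric to a tree; (iii) derive a contradiction from the fact that $G$ is simultaneously a vertex-transitive quasi-tree and has superlinear but subquadratic growth. For step (iii) I would use that a quasi-isometry between a vertex-transitive graph $G$ and a tree $T$ lets us transport a bottleneck structure: by Manning's \Tr{Manning thm}, $G$ has the $\delta$-(BP); iterating the bottleneck midpoint argument as in the proof of \Tr{thm triang} (or simply invoking the structure theory of quasi-trees) shows that the number of disjoint geodesics between far-apart points is bounded, whence $G$ has a tree-like structure that either forces linear growth (if the ``branching'' is bounded, i.e. $G$ is quasi-isometric to a line or has at most two ends contributing) or exponential growth (if branching is unbounded). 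Since no vertex-transitive graph can be a quasi-tree with intermediate growth, we reach the desired contradiction, and no appeal to Manning's theorem is strictly needed if one prefers: one can instead directly use that \Tr{thm lssc}'s first alternative together with the dichotomy for growth of quasi-trees does the job.

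An alternative and cleaner route for step (iii), avoiding a detailed analysis of quasi-trees, is to recall the elementary fact that a vertex-transitive graph of subquadratic growth actually has linear growth --- this follows for instance from the fact that if $|B_v(2r)| < 4 |B_v(r)|$ for infinitely many $r$ then the growth is linear, while if $|B_v(2r)| \geq 4 |B_v(r)|$ for all large $r$ the growth is at least quadratic. Thus ``superlinear but subquadratic'' is already an impossible growth type for vertex-transitive graphs, and in fact one does not even need the hypothesis that $G$ is $k$-SC for this part. The role of \Tr{thm lssc} here is precisely to give an \emph{elementary} argument (not relying on Gromov's theorem) that there is no \emph{finitely presented group}, equivalently no vertex-transitive $k$-SC graph that is a Cayley graph, of growth strictly between linear and quadratic: \Tr{thm lssc} shows such a graph is a quasi-tree, and then the elementary growth dichotomy for vertex-transitive graphs (or directly for trees of bounded degree, after noting a vertex-transitive quasi-tree is quasi-isometric to a bounded-degree tree) kills it.

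The main obstacle I anticipate is making step (iii) genuinely elementary, i.e. proving without Gromov's theorem that a vertex-transitive quasi-tree cannot have intermediate growth. For graphs of bounded degree this is standard: the quasi-tree $G$ is quasi-isometric to a bounded-degree tree $T$, and a bounded-degree tree has growth function that is either bounded by a polynomial of degree $1$ (when $T$ is quasi-isometric to a line, i.e. has $\leq 2$ ends and all ``branches'' are finite and uniformly bounded by vertex-transitivity) or exponential (as soon as some vertex has $\geq 3$ infinite branches arbitrarily far out, which vertex-transitivity propagates everywhere). Care is needed because a quasi-tree need not be quasi-isometric to a \emph{bounded-degree} tree in general, but vertex-transitivity of $G$ (hence bounded degree of $G$) lets us choose such a $T$; alternatively one can run the bottleneck argument of \Tr{thm triang} directly inside $G$ to bound the number of disjoint paths between distant points and conclude. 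I would present the short elementary growth-dichotomy argument for vertex-transitive graphs as the crux, citing it as folklore, and note that the contribution of this paper is the reduction to the quasi-tree case via \Tr{thm lssc}.
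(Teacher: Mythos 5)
Your overall skeleton matches the paper's: split on whether $G$ has the $\delta$-(BP), use the argument of \Tr{thm lssc} (with vertex-transitivity upgrading ``some $v$'' to ``every $v$'') to kill the non-(BP) case, and then argue that a vertex-transitive tree-like graph cannot have intermediate growth. Step (ii) is fine. The problems are all in step (iii), and they are genuine.

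First, your claim that a bounded-degree tree has either linear or exponential growth is false: the trees of \cite{BeSchrRec} that this very paper is built around have uniform growth $\Theta(r^\alpha)$ for every $\alpha\ge 1$ and bounded degree. So you cannot read the dichotomy off the tree $T$; you must extract it from vertex-transitivity of $G$ itself, and your sketch of how vertex-transitivity ``propagates branching'' does not do this (in particular it never rules out the one-ended case). Second, your ``cleaner alternative route'' rests on a false lemma: $|B_v(2r)|<4|B_v(r)|$ for infinitely many $r$ does not imply linear growth --- any growth function $\Theta(r^\alpha)$ with $1<\alpha<2$ satisfies $|B(2r)|\approx 2^\alpha|B(r)|<4|B(r)|$ for all $r$. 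Worse, the assertion that ``superlinear but subquadratic is already an impossible growth type for vertex-transitive graphs, without the $k$-SC hypothesis'' is exactly the (true but nontrivial) theorem the corollary is giving an alternative proof of; invoking it as folklore is circular in this context. The paper closes step (iii) differently and correctly: the $\delta$-(BP) yields, via a compactness argument, a double ray whose two ends are separated by a ball of radius $\delta$, so $G$ has at least two ends; a vertex-transitive graph with at least two ends has either exactly two (hence linear growth, by Imrich--Seifter) or infinitely many (hence exponential growth, by Halin--Wegner). You should replace your step (iii) with this ends-based argument, which needs neither Manning's theorem nor any structure theory of the quasi-isometric tree.
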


\begin{proof}[(Sketch)]
If $G$ has the $\delta$-(BP) for some $\delta$, we can use a standard compactness argument to obtain a double-ray $R$ and a ball $S$ of radius $\delta$ that separates the two ends of $R$. It follows that $G$ has at least 2 ends, and so $G$ has either linear (if 2-ended \cite[Theorem~2.8 \& Proposition~3.1]{ImrSeiNot}) or exponential growth (if infinitely-ended \cite{HalWeg}). 

If $G$ does not have the $\delta$-(BP) then we apply \Tr{thm lssc} (omitting the first two sentences of the proof of \Tr{thm triang} that invoke \Tr{Manning thm}).
\end{proof}

\section{Questions}

As mentioned in the introduction, for every $\alpha >1$ there is an (1-ended) triangulation $T_\alpha$ of the plane $\R^2$ with uniform volume growth of order $\Theta(r^\alpha)$ \cite{BeSchrRec}. 
The cartesian product $T_\alpha \times \Z$ can be thought of as the 1-skeleton of a 3-complex, which we can triangulate to obtain a 3-dimensional simplicial complex  $C_\alpha$, in which each copy of a triangle of $T_\alpha$ appears in the boundary of two tetrahedra. Note that  $C_\alpha$ has uniform volume growth of order $\Theta(r^{\alpha+1})$, and it is homeomorphic to $\R^3$. We are interested in the structure of 3-dimensional simplicial complexes homeomorphic to $\R^3$ (or other 3-manifolds) that have uniform volume growth of order $\Theta(r^\beta)$ for $\beta<3$.  
The following question is an attempt to extend \Tr{thm triang} to three dimensions.

\begin{problem} \label{prob 3D}
Let $X$ be a simplicial complex homeomorphic to $\R^3$, with uniform volume growth $r^{\alpha}$ for some $\alpha <3$. Must $X$ be quasi-isometric with a contractible simplicial 2-complex? 
\end{problem}
The above construction $C_\alpha$ with $\alpha\in (1,2)$ provides some examples. Since $T_\alpha$ is quasi-isometric to a tree, $C_\alpha$ is  quasi-isometric to the cartesian product of a tree with the 2-way infinite path, which is contractible (and even collapsible, a stronger condition we could ask for in \Prb{prob 3D}; see e.g.\ \cite{AdiFunCat} for the definition).

\medskip
We remark that in \Trs{thm triang} and~\ref{thm lssc} the vertex $v$ has to depend on $r$, in other words, we cannot drop the uniformity of the  subquadratic growth if we want to obtain a quasi-tree. For example, let $T$ denote the triangular lattice in $\R^2$, and let $X= x_0 x_1 \ldots$ and $Y= y_0 y_1 \ldots$ be two geodesics emanating from a common vertex $x_0=y
_0$, such that $d(x_i,y_i)= \Theta(\sqrt{i})$. Cut $T$ along $X \cup Y$, throw away the larger piece, and identify $x_i$ with $y_i$ in the smaller piece. The resulting planar triangulation can be visualised as a parabolic cone. It has subquadratic volume growth, but not uniformly so, and it is not 
quasi-isometric with a tree. 

Still, for a unimodular random graph we can ask if a statement similar to   \Trs{thm triang} and~\ref{thm lssc}  holds under a subquadratic expected volume growth condition:

\begin{problem} \label{prob urg}
Let $(G,o)$ be a unimodular random graph\footnote{See \cite{unimodular} for definitions} which is $k$-SC for some $k$, such that $c_1 r< \Ex(|B_o(r)|)< c_2 r^2$ for some constants $c_1,c_2>0$.  Must $(G,o)$ be almost surely 1-ended? 
Must it have an infinite sequence of nested, bounded, cut sets  separating $o$ from infinity? Must every scaling limit of such a graph be a tree?
\end{problem}

Let $G$ be a planar triangulation of uniform subquadratic volume growth. By \Tr{thm triang} $G$ is quasi-isometric with a tree $T$, and since \g must have bounded degrees, it is easy to see that we can choose $T$ to be of bounded degree too. It is a well-known theorem of Lyons \cite{LyoRan} that trees of polynomial growth have no percolation phase transition, i.e.\ their $p_c$ equals 1. It is also well known that the property $p_c = 1$ is   invariant under quasi-isometries between bounded degree graphs \cite[Theorem 7.15]{LyonsBook}, and so is the exponent of the volume growth. Thus we deduce $p_c(G)=1$. This leads to the following question:

\begin{problem} \label{prob pc}
Is it true that every $(G,o)$ as in \Prb{prob urg} satisfies $p_c=1$ almost surely?
\end{problem}

Consider a unimodular random graph $(G,o)$ of super-linear but sub-quadratic expected volume growth. 
We do not impose the $k$-SC condition, so apart from the aforementioned quasi-trees, examples can be obtained from the sequence of graphs defining the Sierpinski gasket by sampling uniform random roots. In all the examples we know of,  simple random walk is  subdiffusive, see e.g.\ \cite[Proposition 8.11]{BarDif}. Is this the case for every such $(G,o)$?


\acknowledgement{We thank Panos Papazoglou and Federico Vigolo for helpful discussions. We thank Guy Lachman, Martin Winter and Geva Yashfe for helping us improve Problems~\ref{prob 3D} and \ref{prob urg}.}

\comment{
\section{ignore for now}
\begin{lemma} \label{lem triang}
Let $G$ be a $k$-SC graph of maximum degree $d$ which is not $(Q,Q)$--quasi-isometric to a tree. Then for every $r\in \N$, there is a ...subgraph $S\subseteq G$ isomorphic with the $r$-fold subdivision of the star $S_4$ with four leaves, such that 
$$d_G(x,y)> f_{k,d,Q}(d_S(x,y))$$
holds for every $x,y\in V(S)$, where $f_{k,d,Q}: \N \to \N$ is a diverging, universal function.
\end{lemma}
\begin{proof}
By \Tr{Manning} we can find two vertices $p,q \in V(G)$ and a $p$--$q$~geodesic $P$, such that letting $m$ be the  midpoint of $P$, the ball $B_m(r)$ does not separate $p$ from $q$. In particular, $p,q$ lie outside $B_m(r)$. Let $Q$ be a $p$--$q$~path outside $B_m(r)$... 
\end{proof}
}

\bibliographystyle{plain}
\bibliography{collective}
\end{document}